\newtheorem{theorem}{Theorem}[section]
\newtheorem{corollary}[theorem]{Corollary}
\newtheorem{lemma}[theorem]{Lemma}
\theoremstyle{definition}
\newtheorem{definition}[theorem]{Definition}
\theoremstyle{remark}
\numberwithin{equation}{section}
\newcommand{\NN}{\mathbb{N}} 
\newcommand{\RR}{\mathbb{R}} 
\newcommand{\pol}{\mathcal{P}}
\newcommand{\hpol}{\mathcal{H}} 
\newcommand{\vdm}{\textsc{vdm}} 
\newcommand{\tell}{\Tilde{\ell}}
\newcommand{\lag}{\mathbf{L}}
\newcommand{\tay}{\mathbf{T}}
\newcommand{\HH}{\mathbb{H}}
\newcommand{\vol}{\mathrm{vol}}
\newcommand{\nn}{\mathbf{n}}
\begin{document}
\title[Continuity property of Lagrange interpolation]{On the continuity of multivariate Lagrange interpolation at Chung-Yao lattices}
\author{Jean-Paul Calvi}
\address{Institut de Mathématiques,
Université de Toulouse III and CNRS (UMR 5219), 31062, Toulouse Cedex 9, France}
\email{jean-paul.calvi@math.ups-tlse.fr}
\author{Phung Van Manh}
\address{Institut de Mathématiques,
Université de Toulouse III and CNRS (UMR 5219), 31062, Toulouse Cedex 9, France and Department of Mathematics, Hanoi University of Education,
136 Xuan Thuy street, Caugiay, Hanoi, Vietnam}
\email{manhlth@gmail.com}

\subjclass[2010]{Primary 41A05, 41A63, 41A80, 65D05}

\keywords{Multivariate Lagrange interpolation, Chung-Yao lattices, multivariate divided differences, de Boor's error formula}

\date{\today}


\begin{abstract} We give a natural geometric condition that ensures that sequences of Chung-Yao interpolation polynomials (of fixed degree) of sufficiently differentiable functions converge to a Taylor polynomial.  
\end{abstract}

\maketitle

\section{Introduction}
\subsection{Stating the problem} When $d+1$ points $a_0,\dots,a_d$ in $\RR$ converge to a limit point $a$, the corresponding Lagrange interpolation polynomial $\mathbf{L}[a_0,\dots,a_d;f]$ of a function $f$ at the $a_i$'s tends to the Taylor polynomial of $f$ at $a$ to the order $d$ and this under the sole assumption that $f$ is $d$ times continuously differentiable on a neighborhood of the limit point. This classical result is an easy consequence of Newton's formula for Lagrange interpolation and of the mean value theorem for divided differences. In this paper, we study a multivariate analogue of this problem. We suppose that the points of a multivariate interpolation lattice $A$ of degree $d$ in $\RR^N$ converge to a limit point $a\in \RR^N$ and ask under what conditions we can assert that the corresponding multivariate Lagrange interpolation polynomials of a function $f$ converge to the Taylor polynomial of $f$ at $a$ to the order $d$ ? The question is answered for a particular but important class of interpolation lattices, the so-called Chung-Yao lattices, see below. 
\subsection{A known criterion} In the multivariate case, a simple clear-cut answer cannot be expected. This perhaps may be regarded as another consequence of the absence of a multivariate mean value equality. We recall a rather general criterion (which actually works for hermitian interpolations) which can be found in \cite{bloomcalvi}. Let us mention that the first results which appeared in the literature concerned the case (of practical importance in finite elements theory) for which the lattices are of the form $A^{(t)}=U^{(t)}(A)$ where $U^{(t)}$ is a sequence of linear transformations whose norms tend to $0$ and $A$ is a fixed lattice. We refer to \cite{bloomcalvi} for details and references to earlier works. 

\medskip

We denote by $\pol^d(\RR^N)$ the space of polynomials in $N$ real variables of degree at most $d$, $X^\alpha$ is the monomial function corresponding to the $N$-index $\alpha$, that is $X^\alpha(x)=x_1^{\alpha_1}\cdots x_N^{\alpha_N}$ for $x=(x_1,\dots,x_N)\in\RR^N$. The length of $\alpha$ is the degree of $X^\alpha$, $|\alpha|=\sum_{i=1}^N \alpha_i$. We denote by $m_d$ the dimension of the vector space $\pol^d(\RR^N)$. We have $m_d=\binom{N+d}{d}$. In the whole paper, $N\geq 2$. 

\begin{theorem}[Bloom and Calvi]\label{theo_old}
 Let $A^{(s)}$ be a sequence of interpolation lattices of degree $d$ in $\RR^N$. If the following condition holds
\begin{equation}\label{eq:hypocalvibloom} |\alpha|=d+1\Longrightarrow \lim_{s\to \infty}\mathbf{L}[A^{(s)}\,;\, X^{\alpha}]=0, \end{equation}
then, for every function $f$ of class $C^{m_d-1}$ in a neighborhood of the origin $0$, we have 
\begin{equation}\lim_{s\to \infty} \lag[A^{(s)}\,;\, f]=\tay^d_0 (f), \end{equation}
where $\lag[A^{(s)}\,;\, \cdot ]$ (resp. $\tay^d_0 (\cdot)$) denotes the Lagrange interpolation projector at the points of $A^{(s)}$ (resp. the Taylor projector at $0$ of 
order $d$). 
\end{theorem}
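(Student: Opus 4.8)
The plan is to reduce everything to a statement about a single suitably flat function, and then to split that function into a polynomial part governed by the hypothesis and a Taylor remainder governed by the divided-difference form of the interpolant. Throughout, in accordance with the problem set out in the introduction, I take the points of $A^{(s)}$ to converge to the limit point, which is the origin. Since $\lag[A^{(s)};\cdot]$ and $\tay^d_0$ are both linear projectors onto $\pol^d(\RR^N)$ and $\tay^d_0(f)\in\pol^d(\RR^N)$, we have $\lag[A^{(s)};\tay^d_0(f)]=\tay^d_0(f)$, so that
\[ \lag[A^{(s)};f]-\tay^d_0(f)=\lag[A^{(s)};g],\qquad g:=f-\tay^d_0(f). \]
The function $g$ is of class $C^{m_d-1}$ near $0$ and satisfies $D^\alpha g(0)=0$ for all $|\alpha|\le d$. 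It thus suffices to prove that $\lag[A^{(s)};g]\to0$.

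The second step is to upgrade the hypothesis \eqref{eq:hypocalvibloom} to all higher degrees: for every $\alpha$ with $|\alpha|\ge d+1$ one has $\lim_{s}\lag[A^{(s)};X^\alpha]=0$. I would argue by induction on $|\alpha|$, the case $|\alpha|=d+1$ being exactly the hypothesis. For $|\alpha|\ge d+2$, write $\alpha=\beta+e_i$ with $|\beta|\ge d+1$ and set $p_s:=\lag[A^{(s)};X^\beta]$, which tends to $0$ by induction. The polynomial $X^\beta-p_s$ vanishes on $A^{(s)}$, hence so does $X_i(X^\beta-p_s)=X^\alpha-X_ip_s$, giving $\lag[A^{(s)};X^\alpha]=\lag[A^{(s)};X_ip_s]$. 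Now $X_ip_s$ has degree $\le d+1$ and coefficients tending to $0$; decomposing it into its part of degree $\le d$ (reproduced by $\lag[A^{(s)};\cdot]$) and its part of degree $d+1$ (to which the base case applies) shows that each contribution tends to $0$. This step is purely algebraic and uses neither the smoothness of $f$ nor the convergence of the points.

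The third step is to expand $g$ by Taylor's formula at the origin to order $m_d-1$, writing $g=P+R$ with $P=\tay^{m_d-1}_0(g)$ and $R=g-P$. Because $\tay^d_0(g)=0$, the polynomial $P$ involves only monomials of degrees between $d+1$ and $m_d-1$, so $\lag[A^{(s)};P]$ is a fixed finite linear combination of the polynomials $\lag[A^{(s)};X^\alpha]$ with $|\alpha|\ge d+1$ and therefore tends to $0$ by the second step. Everything thus comes down to showing $\lag[A^{(s)};R]\to0$, where $R$ is of class $C^{m_d-1}$ and satisfies $D^\alpha R(0)=0$ for every $|\alpha|\le m_d-1$.

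The fourth step, and the main obstacle, is to control $\lag[A^{(s)};R]$ through the Newton (divided-difference) form of the interpolant, which is the content of de Boor's error formula. Ordering the $m_d$ points of $A^{(s)}$ produces Newton polynomials $n^{(s)}_0,\dots,n^{(s)}_{m_d-1}\in\pol^d(\RR^N)$ and divided-difference functionals $\lambda^{(s)}_0,\dots,\lambda^{(s)}_{m_d-1}$ such that $\lag[A^{(s)};R]=\sum_{j=0}^{m_d-1}\lambda^{(s)}_j(R)\,n^{(s)}_j$, where $\lambda^{(s)}_j$ is represented as an integral average of derivatives of order $j$ over the convex hull of the first $j+1$ points. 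As the points converge to $0$, the coefficients of the $n^{(s)}_j$ remain bounded, while each $\lambda^{(s)}_j(R)$ is dominated by the order-$j$ derivatives of $R$ on a set shrinking to $\{0\}$; since $R\in C^{m_d-1}$ and all its derivatives up to order $m_d-1\ge j$ vanish at the origin, every $\lambda^{(s)}_j(R)\to0$, whence $\lag[A^{(s)};R]\to0$ and the proof concludes. The delicate point, and precisely the reason the full strength $C^{m_d-1}$ is required, is to force the top-order term $\lambda^{(s)}_{m_d-1}(R)$ to vanish in the limit; establishing the mean-value/integral representation of these multivariate divided differences for a general poised ordering, so that the flatness of $R$ can be exploited, is where the real work lies.
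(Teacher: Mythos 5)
Your Steps 1--3 are correct, and Step 2 (the induction showing that hypothesis (\ref{eq:hypocalvibloom}) propagates from $|\alpha|=d+1$ to all $|\alpha|\geq d+1$) is a genuine and correctly proved lemma; the splitting $g=P+R$ with $P=\tay^{m_d-1}_0(g)$ then disposes of the polynomial part. (For the record, the paper itself gives no proof of this theorem --- it quotes it from \cite{bloomcalvi} --- so the comparison here is with what any complete proof must contain.) The fatal problem is Step 4, which is where the entire difficulty of the theorem sits: your argument that $\lag[A^{(s)}\,;\,R]\to 0$ uses only that the points tend to $0$ and that $R$ is of class $C^{m_d-1}$ and flat of order $m_d-1$ at the origin; hypothesis (\ref{eq:hypocalvibloom}) is never invoked. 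That statement is false. Take $N=2$, $d=1$ (so $m_d-1=2$), the lattice $A^{(s)}=\{(0,0),(t,t^{3}),(2t,0)\}$ with $t=1/s$ --- this is the paper's own example at the end of Section \ref{sec:mainth}, with $\varepsilon=1$ --- and $R(x)=x_1^3$, which is $C^\infty$ and has all partial derivatives of order $\leq 2$ vanishing at the origin. Solving the $3\times 3$ system directly gives
\[ \lag[A^{(s)}\,;\,R](x)=4t^2x_1-3x_2\;\longrightarrow\;-3x_2\neq 0 .\]
So no argument built only from ``points $\to 0$'' plus flatness can close the proof: the moment condition must be brought to bear on the remainder term, not merely on the polynomial part (which is the easy half).

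The place where your outline concretely breaks is the assertion that ``as the points converge to $0$, the coefficients of the $n^{(s)}_j$ remain bounded.'' This is exactly what fails for degenerating lattices, and ruling out such degeneration is what (\ref{eq:hypocalvibloom}) is for. In the example above, any Newton-type basis must contain a polynomial of degree $1$ separating $(t,t^3)$ from the line through $(0,0)$ and $(2t,0)$, for instance $n_2(x)=x_2/t^3$, whose coefficients blow up like $t^{-3}$; the associated coefficient is $\lambda_2(R)=-3t^3$, which indeed tends to $0$ (flatness does give you that much, as you claim), but the product $\lambda_2(R)\,n_2=-3x_2$ does not vanish in the limit. Thus the coefficient functionals and the basis polynomials fight each other at exactly the same rate, and the outcome is decided by the geometry of the lattice, i.e.\ by whether (\ref{eq:hypocalvibloom}) holds. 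A correct proof must convert (\ref{eq:hypocalvibloom}) into some uniform quantitative control of the interpolation operator on flat functions --- for instance a uniform bound on suitably weighted fundamental polynomials such as $\|a\|^{m_d-1}\,\mathbf{l}(A^{(s)},a,\cdot)$ for $a\in A^{(s)}$, or an equivalent statement --- and that conversion, which also explains why the threshold $C^{m_d-1}$ appears, is the actual content of Bloom and Calvi's argument; it is entirely absent from your proposal.
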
  

Unfortunately condition (\ref{eq:hypocalvibloom}) is not easy to verify, especially if the
degree of interpolation is not small, and it seems difficult to check it on general classes of interpolation lattices.  Besides, theorem \ref{theo_old} requires a high order of smoothness. We point out, however, that although whether the level of differentiability  required in theorem \ref{theo_old} is optimal is not known (in the case of Lagrange interpolation), examples do exist for which convergence does not hold for function of class $C^{d+1}$ but holds for function of higher smoothness, see \cite[example 5.4]{bloomcalvi}.    

\medskip

The aim of this paper is to give a natural geometric condition in the case where the interpolation lattices are Chung-Yao lattices. From an algebraic point of view, they can be regarded as the simplest interpolation lattices : every point is situated at the intersection of $N$ hyperplanes chosen among a
minimal family and the corresponding Lagrange fundamental polynomials are products of 
affine forms. The definition and main properties of Chung-Yao lattices are collected in section \ref{sec:chungyaolat}. Our criterion is given and commented in section \ref{sec:mainth}. The proof is quite technical and is postponed to the next section. It relies on a remainder formula due to Carl de Boor. 

\medskip

We need very few facts from general interpolation theory. They are recalled in the following subsection.  

\subsection{Basic facts on interpolation} 
Let $E$ be a $m$-dimensional space of functions on $\RR^N$ and $A=\{a_1,\ldots, a_{m}\}\subset \RR^N$. We say that $A$ is an interpolation lattice for $E$ if for every function $f$ defined on $A$ there exists a unique $L\in E$ such that $L=f$ on $A$. Given a basis $\mathbf{f}=(f_1,\ldots,f_m)$ of $E$, we define the Vandermonde determinant $\vdm(\mathbf{f}\,;\, A)$ by
\begin{equation}\vdm(\mathbf{f}\,;\, A):=\det \big(f_i(a_j)\big)_{i,j=1}^m. \end{equation}
Then $A$ is an interpolation lattice if and only if \begin{equation}\label{eq:vandercond}  \vdm (\mathbf{f}\,;\, A)\neq 0.\end{equation} Of course, the condition is independent from the choice of the basis $\mathbf{f}$. 
When (\ref{eq:vandercond}) is satisfied, we have
\begin{equation} L=\sum_{i=1}^{m} f(a_i) \; \mathbf{l}(A,a_i,\cdot), \end{equation}
where $\mathbf{l}(A,a_i, \cdot)$ is the unique element of $E$ which vanishes on $A\setminus\{a_i\}$ and takes the value $1$ at $a_i$, 
\begin{equation} \mathbf{l}(A,a_i,x)=\frac{\vdm(\mathbf{f}\,;\, \{a_1,\dots, a_{i-1},x,a_{i+1},\ldots,a_{m}\})}{\vdm(\mathbf{f}\,;\, A)}, \quad 1\leq i\leq m,\quad x\in \RR^N. \end{equation}
In the case where $E=\pol^d(\RR^N)$ we write $L=\lag[A;f]$ and call it the Lagrange interpolation of $f$ at $A$. We say that $A$ is an interpolation lattice of degree $d$. The only other case that we consider in this paper is $E=\hpol^d(\RR^N)$, the space of homogeneous polynomials of degree $d$ in $N$ variables whose dimension is $\binom{N+d-1}{d}$.   
  
 \section{Chung-Yao lattices}\label{sec:chungyaolat}
 
  We recall the construction of the lattices and of some objects attached to them. Despite their apparent simplicity, it seems that these configurations were first considered in 1977's Chung and Yao's paper \cite{chungyao}. Here, we essentially follow the presentation and notational conventions
 of de Boor \cite{deboor}. 
 
\medskip
  
 We work  in $\RR^N$ endowed with its canonical euclidean structure. The corresponding scalar product is denoted  by $\langle \cdot , \cdot \rangle$.  
 
\medskip

 A set of $N$ hyperplanes $H=\{\ell_1,\dots, \ell_N\}$ in $\RR^N$ is said to be in \emph{general position} if the intersection of the $N$ hyperplanes is a singleton, that is $$\bigcap_{i=1}^N\ell_i=\{\vartheta_H\}.$$ If $\ell_i=\{x\in \RR^N \;:\; \langle n_i,x\rangle=c_i\}$, $i=1,\dots,N$, then $H$ is in general position if and only if $\det(n_1,\dots,n_N)\ne 0$. 
 
\begin{definition} A collection $\HH$ of (at least $N$) distinct hyperplanes in $\RR^N$ is said to be in general position if 
\begin{enumerate}
	\item Every $H\in\binom{\HH}{N}$ --- i.e. every subset of $N$ hyperplanes in $\HH$ --- is in general position (as defined above).
	\item The map  
	\begin{equation} H\in\binom{\HH}{N} \longmapsto \vartheta_H:=\bigcap_{\ell\in H} \ell \in \RR^N \end{equation}
	is one-to-one. Here and in the sequel we confuse the singleton $\bigcap_{\ell\in H} \ell$ with its element. 
\end{enumerate}
\end{definition} 

This definition stands at the basis of the following result.  
    
\begin{theorem}[Chung and Yao \cite{chungyao}] Let $\HH$ be a set of $d\geq N$ hyperplanes in general position in $\RR^N$.  
The lattice
 \begin{equation} \Theta_{\HH}=\left\{\vartheta_H=\bigcap_{\ell\in H} \ell \;:\;  H\in\binom{\HH}{N} \right\}\end{equation}
 is an interpolation lattice of degree $d-N$. Moreover, if $\ell\in \HH$ is given by $\ell=\{x\in \RR^N \;:\; \langle n_\ell,x\rangle=c_\ell\}$ then we have the interpolation formula
 \begin{equation}\label{eq:chinterpform} \lag[\Theta_\HH\,;\, f](x)=\sum_{H\in\binom{\HH}{N}} f(\vartheta_H)\; \prod_{\ell\not\in H} \frac{\langle n_\ell,x \rangle - c_\ell}{\langle n_\ell,\vartheta_H\rangle -c_\ell}. \end{equation}
The lattice $\Theta_\HH$ is called a \emph{Chung-Yao lattice} (of degree $d-N$) and the interpolation formula is called the \emph{Chung-Yao interpolation formula} corresponding to $\HH$. In particular, we have
\begin{equation} \mathbf{l}(\Theta_\HH, \vartheta_H, x)= \prod_{\ell\not\in H} \frac{\langle n_\ell,x \rangle - c_\ell}{\langle n_\ell,\vartheta_H\rangle -c_\ell}, \quad H\in\binom{\HH}{N}. \end{equation} 
\end{theorem}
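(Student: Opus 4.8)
The plan is to produce explicitly a dual basis and then read off both assertions from it. Write $e=d-N$ and observe first that the number of points of $\Theta_\HH$ equals $\binom{d}{N}$ exactly because the map $H\mapsto\vartheta_H$ is injective --- which is precisely hypothesis (2) in the definition of general position. Since $\dim\pol^{e}(\RR^N)=\binom{N+e}{e}=\binom{d}{N}=m_e$, the set $\Theta_\HH$ has the right cardinality, and it will be an interpolation lattice of degree $e$ as soon as I exhibit, for each $H\in\binom{\HH}{N}$, a polynomial $p_H\in\pol^{e}(\RR^N)$ satisfying the Kronecker relations $p_H(\vartheta_{H'})=\delta_{H,H'}$. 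The candidates are of course the products displayed in the statement, $p_H(x)=\prod_{\ell\notin H}(\langle n_\ell,x\rangle-c_\ell)/(\langle n_\ell,\vartheta_H\rangle-c_\ell)$; each is a product of $d-N=e$ affine forms and hence lies in $\pol^{e}(\RR^N)$.

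Next I would check that the $p_H$ are well defined, i.e. that no denominator $\langle n_\ell,\vartheta_H\rangle-c_\ell$ with $\ell\notin H$ vanishes. This is the geometric heart of the matter and the step I expect to be the main obstacle, since it is exactly where the injectivity hypothesis (2) must be used. Suppose for contradiction that $\vartheta_H\in\ell$ for some $\ell\notin H$. Pick any $\ell_0\in H$ and set $H'=(H\setminus\{\ell_0\})\cup\{\ell\}$, which is again a member of $\binom{\HH}{N}$. Then $\vartheta_H$ lies on every hyperplane of $H'$ (on those inherited from $H$, and on $\ell$ by assumption), so $\vartheta_H=\vartheta_{H'}$ by hypothesis (1); but $\ell_0\in H\setminus H'$ forces $H\neq H'$, contradicting injectivity in (2). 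Hence every denominator is nonzero and $p_H$ is a genuine element of $\pol^{e}(\RR^N)$.

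It then remains to verify the Kronecker relations and conclude. For $H'=H$ every factor of $p_H(\vartheta_H)$ equals $1$, so $p_H(\vartheta_H)=1$. For $H'\neq H$, since both sets have $N$ elements there is some $\ell^\ast\in H'\setminus H$; this $\ell^\ast$ contributes a factor to $p_H$, and evaluating at $\vartheta_{H'}\in\ell^\ast$ makes its numerator $\langle n_{\ell^\ast},\vartheta_{H'}\rangle-c_{\ell^\ast}$ vanish, so $p_H(\vartheta_{H'})=0$. The relations $p_H(\vartheta_{H'})=\delta_{H,H'}$ immediately give linear independence of the family $\{p_H\}$ (evaluate any vanishing combination at the points of $\Theta_\HH$), and since there are $\binom{d}{N}=m_e$ of them they form a basis of $\pol^{e}(\RR^N)$. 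Consequently $\vdm(\mathbf{f}\,;\,\Theta_\HH)\neq0$ for any basis $\mathbf{f}$, so $\Theta_\HH$ is an interpolation lattice of degree $e=d-N$; by the uniqueness characterizing the fundamental polynomials we obtain $\mathbf{l}(\Theta_\HH,\vartheta_H,\cdot)=p_H$, and expanding $\lag[\Theta_\HH\,;\,f]=\sum_{H}f(\vartheta_H)\,p_H$ yields the Chung-Yao interpolation formula~(\ref{eq:chinterpform}).
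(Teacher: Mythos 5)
Your proof is correct and complete. The paper itself gives no proof of this theorem (it is recalled from Chung and Yao's article), and your argument --- the cardinality count $\binom{d}{N}=\dim\pol^{d-N}(\RR^N)$, the explicit dual system of products of affine forms, and the verification of the Kronecker relations --- is exactly the classical proof of the cited result. In particular, you correctly isolated and handled the one genuinely geometric step, namely that no denominator $\langle n_\ell,\vartheta_H\rangle-c_\ell$ with $\ell\notin H$ can vanish: the swap argument ($H'=(H\setminus\{\ell_0\})\cup\{\ell\}$, which forces $\vartheta_{H'}=\vartheta_H$ while $H'\neq H$) is precisely where the injectivity of $H\mapsto\vartheta_H$ from the definition of general position is needed.
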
    

When the set of hyperplanes we use is clear, we write $\Theta$ instead of $\Theta_\HH$. Of course, in (\ref{eq:chinterpform}), different equations for the hyperplanes yield a same formula. In the particular case $N=1$, every set of interpolation nodes may be regarded as a (trivial) Chung-Yao lattice.

\medskip

As shown by (\ref{eq:chinterpform}), interpolation polynomials at Chung-Yao lattices are easy to compute. Some difficulties, however, must be pointed out. 
In constructing a Chung-Yao lattice, we start from a family of hyperplanes and compute the interpolation points by solving, in principle, $m_d$ linear 
systems (of order $N$). Besides, it is a difficult problem, even in the case $N=2$, to decide how to choose the hyperplanes if a special requirement is
made on the location of the interpolation points. For instance, we currently do not know what kind of limiting distribution we can obtain with a growing
number of Chung-Yao points. We mention that an interesting Chung-Yao lattice was constructed by Sauer and Xu \cite{sauerxu} on bidimensional disks.

\section{Chung-Yao lattices of points converging to the origin}\label{sec:mainth}
\subsection{The convergence theorem} From now on, we shall confuse an hyperplane $\ell$ with the affine form $\ell(x)=\langle \nn, x\rangle -c$ which defines it, where $\nn$ is normalized so that $\|\nn\|=1$. This abuse of language (each hyperplane has two normalized equations) should not create confusion. Boldfaced $\nn$ will be kept for normalized vectors and vectors derived from them.  
\par 
Supposing that the points of a sequence $\Theta^{(s)}$ of Chung-Yao lattices of same degree converge to the origin (or to any other fixed point), we study under what conditions the corresponding interpolation operator converges to the Taylor projector at the origin. Our main result is summarized in the following theorem.

\begin{theorem}\label{th:main} Let $d\geq N$. Let $\Theta^{(s)}$, $s\in \NN$, be a sequence of Chung-Yao lattices of degree $d-N$ in $\RR^N$. We assume that $\Theta^{(s)}$ is the lattice given by the family of hyperplanes  \begin{equation}\label{eq:defparaell} \HH^{(s)}=\{\ell^{(s)}_1, \dots, \ell^{(s)}_d\}, \quad \textrm{with $\ell^{(s)}_i=\langle \nn^{(s)}_i, \cdot \rangle -c^{(s)}_i$, $\|\nn_i^{(s)}\|=1$, $i=1,\dots,d$}.\end{equation}
Consider the following two conditions. 
\begin{enumerate}
	\item[(C1)]\label{it:tendtozero} All the points of the lattice tend to $0$ as $s\rightarrow\infty$, that is $\max \{\|\vartheta\|\,:\, \vartheta\in \Theta^{(s)}\}\rightarrow 0$ as $s\rightarrow\infty$,
	\item[(C2)]\label{it:volume} The volumes \begin{equation}\vol\left(\nn_{i_1}^{(s)}, \dots, \nn_{i_N}^{(s)}\right), \quad 1\leq i_1<i_2<\cdots < i_N\leq d,\end{equation} of the parallelotope spanned by the vectors $\nn_{i_1}^{(s)}, \dots, \nn_{i_N}^{(s)}$ are bounded from below, away from $0$, uniformly in $s$. 
\end{enumerate}
If conditions (C1) and (C2) are satisfied then, for every function $f$ of class $C^{d-N+1}$ on a neighborhood of the origin, we have 
\begin{equation}\label{eq:conclumain}\lim_{s\rightarrow\infty} \lag [\Theta^{(s)}\,;\, f]= \tay_0^{d-N}(f).\end{equation}
\end{theorem}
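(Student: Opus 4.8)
The plan is to reduce the multivariate convergence statement to the verifiable hypothesis of Theorem \ref{theo_old}. Since the Bloom–Calvi criterion (\ref{eq:hypocalvibloom}) already guarantees convergence to the Taylor projector once we know that $\lag[\Theta^{(s)};X^\alpha]\to 0$ for every multi-index of length $d-N+1$, the entire problem becomes the estimation of Lagrange interpolation polynomials of a single monomial of degree one above the interpolation degree. The first step, therefore, would be to write down $\lag[\Theta^{(s)};X^\alpha]$ explicitly using the Chung–Yao formula (\ref{eq:chinterpform}), so that it is a sum over the $\binom{d}{N}$ lattice points of the values $\vartheta_H^\alpha$ weighted by the fundamental polynomials $\mathbf{l}(\Theta_\HH,\vartheta_H,\cdot)$, each a product of affine forms $(\ell^{(s)}_j(x))/(\ell^{(s)}_j(\vartheta_H))$ over the $j\notin H$.

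The second and central step would be to control the two competing effects as $s\to\infty$. By condition (C1) all lattice points $\vartheta_H^{(s)}$ go to $0$; since $|\alpha|=d-N+1\geq 1$, the values $\vartheta_H^\alpha$ in the numerator are small, of order at most $(\max_\vartheta\|\vartheta\|)^{d-N+1}$. The danger lies in the denominators $\prod_{j\notin H}\ell^{(s)}_j(\vartheta_H)=\prod_{j\notin H}(\langle\nn_j^{(s)},\vartheta_H\rangle-c_j^{(s)})$, which may themselves tend to $0$ and thus blow up the coefficients. This is precisely where condition (C2) should enter: the uniform lower bound on the volumes $\vol(\nn^{(s)}_{i_1},\dots,\nn^{(s)}_{i_N})$ prevents the defining normals from becoming nearly linearly dependent, so that the Cramer-type solution giving each $\vartheta_H$ stays well-conditioned and the distances between distinct lattice points, as well as the distances from a point $\vartheta_H$ to the hyperplanes $\ell_j$ with $j\notin H$, remain comparable to the overall diameter of the lattice. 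I would make this quantitative by expressing $\ell^{(s)}_j(\vartheta_H)$ as a ratio of determinants (a Cramer's rule computation) whose denominator is the volume controlled by (C2), thereby bounding the fundamental polynomial evaluated at $0$, i.e. $|\mathbf{l}(\Theta_\HH,\vartheta_H,0)|$, by a constant independent of $s$ times an appropriate power of the diameter.

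The third step would combine these estimates to show that each summand of $\lag[\Theta^{(s)};X^\alpha]$, and hence the whole sum of $\binom{d}{N}$ terms, tends to $0$ uniformly on compact neighborhoods of the origin. Because the affine forms $\ell^{(s)}_j(x)$ in the numerator are uniformly bounded on a fixed compact set (their normals are unit vectors and the constants $c^{(s)}_j\to 0$ by (C1)), the only vanishing factor comes from the monomial value $\vartheta_H^\alpha$, which gives a factor going to $0$, against a uniformly bounded coefficient coming from (C2). Establishing the uniform lower bound on $|\ell^{(s)}_j(\vartheta_H)|$ relative to the lattice diameter is, I expect, the main obstacle: one must show that condition (C2) alone, without any assumption on the rate at which the points approach the origin, forces a quantitative separation between each lattice point and the hyperplanes not passing through it, and translate the abstract volume bound into an effective control on these affine evaluations via determinant identities.

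Once this monomial estimate is in place, I would invoke Theorem \ref{theo_old} directly. A caveat I would flag is the gap in smoothness: Theorem \ref{theo_old} nominally requires class $C^{m_{d-N}-1}$, whereas the target conclusion (\ref{eq:conclumain}) asks only for $C^{d-N+1}$. I therefore anticipate that the proof will \emph{not} merely cite Theorem \ref{theo_old} as a black box but must instead re-derive convergence for this restricted class of lattices under weaker regularity, most plausibly through de Boor's remainder formula as announced in the introduction, using the Chung–Yao structure to express the interpolation error as an integral of a $(d-N+1)$-st order derivative of $f$ against a kernel whose total mass is controlled by the same (C1)–(C2) estimates developed above.
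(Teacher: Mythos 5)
Your reduction to theorem \ref{theo_old} founders on the quantitative claims of your second and third steps, and these are not repairable technicalities but false statements. Condition (C2) does \emph{not} force the distance from a lattice point $\vartheta_H$ to a hyperplane $\ell_j$, $j\notin H$, to be comparable to the diameter of the lattice, nor are the distances between distinct lattice points so comparable, and the individual summands of $\lag[\Theta^{(s)};X^\alpha]$ need not tend to $0$. Take $N=2$, $t=1/s$, and the lines $\ell_1(x)=x_2$, $\ell_2(x)=x_1-t$, $\ell_3(x)=(x_1+x_2-t-t^2)/\sqrt{2}$: the volumes $\vol(\nn_i,\nn_j)$ are $1,\,1/\sqrt{2},\,1/\sqrt{2}$, so (C2) holds, and the lattice $\Theta^{(s)}=\{(t,0),\,(t+t^2,0),\,(t,t^2)\}$ satisfies (C1); yet the point $\vartheta_{\{\ell_1,\ell_2\}}=(t,0)$ lies at distance $t^2/\sqrt{2}$ from $\ell_3$, an order of magnitude smaller than the diameter $\approx t$. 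Accordingly, in $\lag[\Theta^{(s)};X^{(2,0)}]$ the term attached to $(t,0)$ equals $t^2\,\ell_3(x)/\ell_3\bigl((t,0)\bigr)=-(x_1+x_2-t-t^2)\rightarrow -(x_1+x_2)\neq 0$; the full interpolant does tend to $0$, but only through cancellation with the terms $(1+t)^2(x_1-t)$ and $x_2$ coming from the other two points. (The first example of section \ref{sec:mainth}, where a fundamental polynomial of size $t^{-2}$ appears, exhibits exactly the same phenomenon.) So the separation estimate you aim at in your second step is unobtainable, and \emph{any} argument that bounds the Chung-Yao sum term by term must fail: the monomial condition (\ref{eq:hypocalvibloom}) is indeed true under (C1)--(C2) — it follows \emph{a posteriori} from theorem \ref{th:main} applied to $f=X^\alpha$ — but it can only be reached through cancellations that your strategy ignores. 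Note also that (C2) enters the paper's proof in a completely different guise: not as a point-to-hyperplane separation, but as the lower bound $|\langle\nn_i,\nn_K\rangle|\geq\delta$ on the denominators $\tell_i(\nn_K)$ of de Boor's polynomials $P_K$ in (\ref{eq:defPKdBoor}), where $\nn_K$ is the direction of the line $\cap_{\ell\in K}\ell$; those quantities, unlike the values $\ell_j(\vartheta_H)$, are uniformly controlled.

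Independently of this, the caveat you flag yourself is decisive: theorem \ref{theo_old} applied to lattices of degree $d-N$ requires $f\in C^{m_{d-N}-1}$ with $m_{d-N}=\binom{d}{N}$, which in general greatly exceeds the class $C^{d-N+1}$ of the statement, so even a correct proof of the monomial condition would not yield theorem \ref{th:main} as stated. You correctly guess that de Boor's remainder formula must carry the argument, but that is precisely where all the work lies, and your proposal leaves it blank. The paper's proof never mentions monomials or theorem \ref{theo_old}: it pairs de Boor's remainder formula (theorem \ref{errordeboor}) with a parallel representation of the Taylor remainder (corollary \ref{error_taylor}), which rests on the inductive multilinear identity of lemma \ref{general_rep}; subtracting the two expressions writes $\lag[\Theta^{(s)};f]-\tay_0^{d-N}(f)$ as a sum $S_1+S_2$, where $S_1\to 0$ by continuity of multivariate divided differences (using (C1) together with the uniform bound on the $P_K$'s supplied by (C2)), and $S_2$ is bounded by a constant times $\|\Theta^{(s)}\|$. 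You would need to develop this machinery — or some substitute that captures the cancellation — from scratch.
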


Of course, (\ref{eq:conclumain}) holds in every normed vector space topology of $\pol^{d-N}(\RR^N)$.

\subsection{On condition (C2)} The condition on the volume of the parallelotopes is equivalent to the following,

\begin{equation}\label{eq:C2withdet} \liminf_{s\rightarrow\infty} \min_{1\leq i_1<\dots<i_N\leq d} \left|\det \left(\nn_{i_1}^{(s)}, \dots, \nn_{i_N}^{(s)}\right)\right| >0. \end{equation}

In $\RR^2$ we have  \begin{equation}\label{eq:anglecond} \vol(\nn_i^{(s)},\nn_j^{(s)})=\sin (\alpha_{ij}^{(s)}),\end{equation} where $\alpha_{ij}^{(s)}\in ]0,\pi[$ is the line angle between the lines $\ell_i$ and $\ell_j$. Thus $\HH$ satisfies condition (C2) if and only of the angles between any two (distinct) lines in $\HH^{(s)}$ remain uniformly bounded from below by a positive constant. An example of Chung-Yao lattice of degree $2$ in $\RR^2$ and the various parameters involved in theorem \ref{th:main} are shown in figure \ref{fig:CYfig}.
\begin{figure}[htb]
	\centering
		\includegraphics[width=0.8\textwidth]{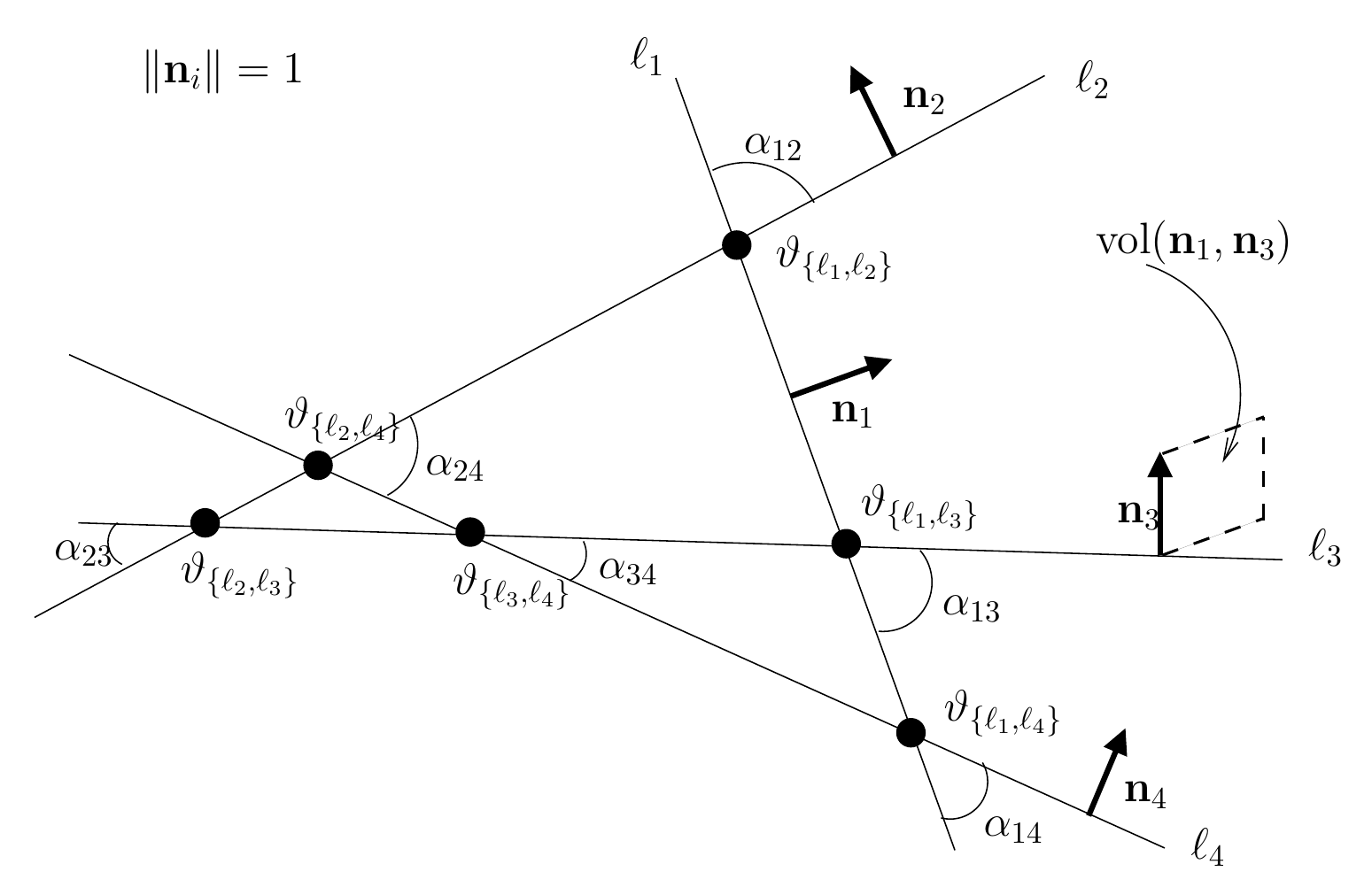}
\caption{A bidimensional Chung-Yao lattice.}
	\label{fig:CYfig}
\end{figure}

\medskip

Conditions (C1) and (C2) are obviously independent. However, when we know that the second one holds true, the first one
is easily checked as shown by the following lemma. 

\begin{lemma}\label{algebra_condition}
If (C2) is satisfied then (C1) is equivalent to 
\begin{enumerate}
\item[(C3)]  $\lim_{s\rightarrow\infty} \max_{i=1,\dots,d} |c^{(s)}_i|= 0$ where $c^{(s)}_i$ is defined in (\ref{eq:defparaell}).
\end{enumerate}
\end{lemma}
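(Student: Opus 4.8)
The plan is to describe every lattice point explicitly as the solution of a small linear system and then treat the two implications separately. Fix $s$ and write $\vartheta_H$ for the point attached to $H=\{\ell^{(s)}_{i_1},\dots,\ell^{(s)}_{i_N}\}\in\binom{\HH^{(s)}}{N}$. By definition $\vartheta_H$ satisfies $\langle \nn^{(s)}_{i_k},\vartheta_H\rangle = c^{(s)}_{i_k}$ for $k=1,\dots,N$, that is $V_H^{\,t}\,\vartheta_H = \mathbf{c}_H$, where $V_H$ is the matrix with columns $\nn^{(s)}_{i_1},\dots,\nn^{(s)}_{i_N}$ and $\mathbf{c}_H=(c^{(s)}_{i_1},\dots,c^{(s)}_{i_N})$. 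The determinant of this system is exactly $\det V_H=\det(\nn^{(s)}_{i_1},\dots,\nn^{(s)}_{i_N})$, the quantity appearing in (\ref{eq:C2withdet}); hence condition (C2) provides $\delta>0$ and an index $s_0$ with $|\det V_H|\geq\delta$ for all such $H$ and all $s\geq s_0$.

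For the implication (C3) $\Rightarrow$ (C1) under (C2), I would solve the system by Cramer's rule: the $j$-th coordinate of $\vartheta_H$ is a quotient whose denominator is $\det V_H$ and whose numerator is the determinant of $V_H^{\,t}$ with its $j$-th column replaced by $\mathbf{c}_H$. Expanding that numerator along the replaced column and bounding each of the resulting $(N-1)\times(N-1)$ minors by Hadamard's inequality — their entries are coordinates of unit vectors, so every minor is at most $1$ in absolute value — yields $\|\vartheta_H\|\leq (C_N/\delta)\,\max_i|c^{(s)}_i|$ with $C_N$ depending only on $N$. Taking the maximum over $H$ and letting $s\to\infty$ transfers (C3) to (C1).

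The reverse implication (C1) $\Rightarrow$ (C3) is where the hypothesis $d\geq N$ does the work, and here (C2) is not even needed. Since $d\geq N$, each hyperplane $\ell^{(s)}_i$ can be completed to a set of $N$ hyperplanes of $\HH^{(s)}$, which by general position meet in a lattice point $\vartheta\in\Theta^{(s)}$ lying on $\ell^{(s)}_i$. Then $c^{(s)}_i=\langle\nn^{(s)}_i,\vartheta\rangle$, so by the Cauchy--Schwarz inequality and $\|\nn^{(s)}_i\|=1$ we obtain $|c^{(s)}_i|\leq\|\vartheta\|\leq\max\{\|\vartheta\|:\vartheta\in\Theta^{(s)}\}$. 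Maximizing over $i$ and invoking (C1) gives (C3).

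The one genuinely quantitative step is the uniform bound on the inverses $(V_H^{\,t})^{-1}$ in the first implication; everything rests on combining the lower bound $|\det V_H|\geq\delta$ furnished by (C2) with the trivial upper bound on cofactors coming from $\|\nn^{(s)}_i\|=1$. I expect no real obstacle beyond keeping the constant $C_N$ independent of $s$ and of the chosen $H$, which Hadamard's inequality guarantees automatically.
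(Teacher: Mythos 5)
Your proof is correct and takes essentially the same route as the paper's: Cauchy--Schwarz applied to $c^{(s)}_i=\langle \nn^{(s)}_i,\vartheta_H\rangle$ for the direction (C1) $\Rightarrow$ (C3), and Cramer's rule with (C2) keeping the denominator away from zero for the converse. Your Hadamard bound on the cofactors simply makes quantitative what the paper states qualitatively (``the numerator tends to $0$''), and your observation that (C2) is not needed for (C1) $\Rightarrow$ (C3) is likewise implicit in the paper's argument.
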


\begin{proof} We show that (C1) implies (C2). Consider $H^{(s)}\in \binom{\HH^{(s)}}{N}$ with $\ell_i^{(s)}\in H^{(s)}$. From $\langle \nn_i^{(s)}, \vartheta_{H^{(s)}}\rangle -c_i^{(s)}=0$, we get $$|c_i^{(s)}|\leq \| \nn_i^{(s)}\|\cdot  \| \vartheta_{H^{(s)}}\| =  \| \vartheta_{H^{(s)}}\|  \rightarrow 0, \quad s\to\infty.$$ 
To show the converse, we observe that  if  $H^{(s)}=\{\ell^{(s)}_{i_1},\dots,\ell_{i_N}^{(s)}\}$ then the coordinates $(x_k)$ of $\vartheta_{H^{(s)}}$ are solutions of the linear system
$$\sum_{k=1}^N \nn^{(s)}_{i_j \,k}x_k=c^{(s)}_j, \quad j=1,\dots, N,$$  
and the claim follows from Cramer's formula in which, thanks to condition (C2), the denominator remains away from $0$ whereas the numerator tends to $0$.\end{proof}
\subsection{Affine transformations of Chung-Yao lattices} 
 Let $\mathcal{L}(x)=L(x)+b$ be an affine transformation (isomorphism) of $\RR^N$ (with $L$ its linear part). If $\HH$ is in general position so is $\mathcal{L}(\HH):=\{\mathcal{L}(\ell_i)\,:\, i=1,\dots,d\}$ and $\mathcal{L}$ induces a one-to-one correspondence between $\binom{\HH}{N}$ and $\binom{\mathcal{L}(\HH)}{N}$. Moreover
if $H\in \binom{\HH}{N}$ then $$\vartheta_{\mathcal{L}(H)}=\mathcal{L}(\vartheta_{H})\quad\textrm{and}\quad \Theta_{\mathcal{L}(\HH)}=\mathcal{L}(\Theta_\HH).$$

In the following theorem we translate the conditions of theorem \ref{th:main} when the points of a Chung-Yao lattice are sent to the origin by applying a sequence of affine transformations. 

\begin{theorem}\label{th:afftrans} Let $\HH=\{\ell_1,\dots,\ell_d\}$ be a fixed collection of $d$ hyperplanes in general position in $\RR^N$, $d\geq N$, with, as above, $\ell_i=\{x\in\RR^N \,:\, \langle \nn_i,x\rangle -c_i=0\}$, $\|\nn_i\|=1$. Let $\mathcal{L}_s=L_s+b_s$, $s\in \NN$,  be a sequence of affine transformations of $\RR^N$. We set \begin{equation} \HH^{(s)}=  \mathcal{L}_s(\HH), \quad s\in \NN. \end{equation}
We consider the sequence of Chung-Yao lattices $\Theta^{(s)}$ induced by $\HH^{(s)}$. The following assertions are equivalent.
\begin{enumerate}
	\item $\Theta^{(s)}$ satisfies conditions (C1) and (C2). 
	\item There exists a positive constant $\Delta$ such that
	\begin{equation} \left|\det L_s\right| \cdot \prod_{j=1}^N \|L_s^{-T}(\nn_{i_j})\| \leq \Delta, \quad 1\leq i_1 <\dots < i_N\leq d, \quad s\in \NN ,\end{equation} 
	and 
\begin{equation}\max_{i=1,\dots,d} \frac{1}{\|L_s^{-T} (\nn_i)\|}\cdot \big|c_i+ \langle \nn_i, L_s^{-1}(b_s)\rangle\big| \rightarrow 0,\quad s\rightarrow\infty,\end{equation} 
\end{enumerate}
where $L_s^{-T}$ denotes the transpose of the inverse of $L_s$. 
\end{theorem}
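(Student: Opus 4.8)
The plan is to reduce everything to an explicit description of the normalized equations of the transformed hyperplanes $\mathcal{L}_s(\ell_i)$, and then to read conditions (C1) and (C2) off these equations with the help of Lemma \ref{algebra_condition}. Since each $\mathcal{L}_s$ is an affine isomorphism, $L_s$ is invertible and $\det L_s \neq 0$, so all quantities occurring in assertion (2) are well defined.

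First I would compute the affine form defining $\ell_i^{(s)}:=\mathcal{L}_s(\ell_i)$. A point $y$ lies on $\mathcal{L}_s(\ell_i)$ if and only if $L_s^{-1}(y-b_s)\in\ell_i$, that is $\langle \nn_i, L_s^{-1}(y-b_s)\rangle=c_i$, which rewrites, using $\langle L_s^{-T}\nn_i, y\rangle=\langle \nn_i, L_s^{-1}y\rangle$, as $\langle L_s^{-T}\nn_i, y\rangle=c_i+\langle \nn_i, L_s^{-1}(b_s)\rangle$. Dividing by $\|L_s^{-T}\nn_i\|$ to normalize the normal vector, I obtain
\begin{equation*}
\nn_i^{(s)}=\frac{L_s^{-T}\nn_i}{\|L_s^{-T}\nn_i\|},\qquad c_i^{(s)}=\frac{1}{\|L_s^{-T}\nn_i\|}\big(c_i+\langle \nn_i, L_s^{-1}(b_s)\rangle\big).
\end{equation*}
The two admissible normalized equations differ by a global sign, which is irrelevant below since only $|c_i^{(s)}|$ and absolute values of determinants enter. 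With this in hand, the second displayed condition of assertion (2) is literally (C3), namely $\max_i|c_i^{(s)}|\to 0$.

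Next I would handle (C2) through its determinant form \eqref{eq:C2withdet}. By multilinearity of the determinant and $\det L_s^{-T}=1/\det L_s$, one finds
\begin{equation*}
\det\big(\nn_{i_1}^{(s)},\dots,\nn_{i_N}^{(s)}\big)=\frac{\det(\nn_{i_1},\dots,\nn_{i_N})}{\det(L_s)\,\prod_{j=1}^N\|L_s^{-T}\nn_{i_j}\|}.
\end{equation*}
Because $\HH$ is fixed and in general position, the finitely many numbers $|\det(\nn_{i_1},\dots,\nn_{i_N})|$ are bounded above and below by positive constants. Hence \eqref{eq:C2withdet} holds precisely when the denominator $|\det L_s|\prod_j\|L_s^{-T}\nn_{i_j}\|$ stays bounded above uniformly in $s$ and in the chosen $N$-tuple, which is exactly the first displayed condition of assertion (2); so (C2) is equivalent to the $\Delta$-bound.

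Finally I would assemble the equivalence. Having shown (C2)$\iff$[$\Delta$-bound] and (C3)$\iff$[$c$-condition], I invoke Lemma \ref{algebra_condition}, which guarantees that under (C2) one has (C1)$\iff$(C3); consequently the conjunction ``(C1) and (C2)'' is equivalent to ``(C2) and (C3)'', and therefore to assertion (2) (the cases where (C2) fails being handled trivially, as both conjunctions are then false). I expect the only genuinely delicate point to be the bookkeeping in the first step, namely tracking the transpose–inverse $L_s^{-T}$ in the normal vector and the normalization factor $\|L_s^{-T}\nn_i\|$ in the constant; everything afterward is formal, since the fixed geometry of $\HH$ turns $|\det(\nn_{i_1},\dots,\nn_{i_N})|$ into a harmless bounded factor.
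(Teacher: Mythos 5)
Your proof is correct and follows essentially the same route as the paper: derive the normalized equation of $\mathcal{L}_s(\ell_i)$ (normal $L_s^{-T}\nn_i/\|L_s^{-T}\nn_i\|$, constant $\big(c_i+\langle \nn_i,L_s^{-1}(b_s)\rangle\big)/\|L_s^{-T}\nn_i\|$) and then conclude via Lemma \ref{algebra_condition}. The only difference is that you spell out the determinant identity $\det\big(\nn_{i_1}^{(s)},\dots,\nn_{i_N}^{(s)}\big)=\det(\nn_{i_1},\dots,\nn_{i_N})\big/\big(\det(L_s)\prod_j\|L_s^{-T}\nn_{i_j}\|\big)$ linking (C2) to the $\Delta$-bound, a step the paper leaves implicit, so your write-up is if anything more complete.
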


\begin{proof} It follows from the normalized equation of $\mathcal{L}_s(\ell_i)$ together with lemma \ref{algebra_condition}. Indeed, with $\ell_i(x)=\langle \nn_i, x\rangle -c_i$, we have
$$\mathcal{L}(\ell_i) =\{x\in\RR^N \,:\,  \langle \nn_i, \mathcal{L}_s^{-1}(x)\rangle -c_i=0\}.$$ Since for $x\in \mathcal{L}(\ell_i)$,
\begin{multline}0=\langle \nn_i, L_s^{-1}(x-b_s))\rangle -c_i
=\langle \nn_i, L_s^{-1}(x)\rangle -\big(c_i+\langle \nn_i, L_s^{-1}(b_s)\rangle\big)\\
=\langle L_s^{-T}(\nn_i), x\rangle -\big(c_i+\langle \nn_i, L_s^{-1}(b_s)\rangle\big),\end{multline}
a normalized equation of $\mathcal{L}_s(\ell_i)$ is given by
$$\left\langle \frac{L_s^{-T}(\nn_i)}{\|L_s^{-T}(\nn_i)\|}\, , x\right\rangle - \frac{1}{\|L_s^{-T}(\nn_i)\|}\big\{c_i+\langle \nn_i, L_s^{-1}(b_s)\rangle\big\}.\qedhere$$
\end{proof} 

\subsection{Examples} In $\RR^2$ any interpolation lattice of degree $1$ is a Chung-Yao lattice (based on the three distinct lines defined by the interpolation points). Moreover, any such lattice is the image under an affine isomorphism of the lattice $\Theta:=\{(0,0), (1,0), (0,1)\}$ constructed with the lines of equations $\ell_1(x_1,x_2)=x_1$, $\ell_2(x_1,x_2)=x_2$ and $\ell_3(x_1,x_2)=x_1+x_2-1$. 

Consider the affine transformations $\mathcal{L}_s$ defined by
$$\mathcal{L}_s(x)=\begin{pmatrix} t^2 & 0 \\ 0 & -t^2u\end{pmatrix}\begin{pmatrix} x_1\\ x_2\end{pmatrix} +\begin{pmatrix} t \\ t\end{pmatrix}, \quad x=\begin{pmatrix} x_2\\ x_2\end{pmatrix}\in \RR^2,\quad t=1/s, \quad s\in \NN^\star,$$
where $u$ is a function of $t$ such that $\lim_{t\rightarrow 0}u(t)=1$,
and the lattice 
$$\Theta^{(s)}=\mathcal{L}_s(\Theta)=\left\{(t,t), (t^2+t,t), (t,-t^2u+t)\right\}, \quad t=1/s.$$
It is not difficult to see that $\Theta^{(s)}$ satisfies condition (C1) and (C2). For (C2) we use (\ref{eq:anglecond}) and observe that one of the angle is equal to $\pi/2$ while, thanks to the assumption on $u$, the other two tend to $\pi/4$ as $t\rightarrow 0$. Hence, according to theorem \ref{th:main}, the corresponding Lagrange interpolation polynomials at $\Theta^{(s)}$ of any twice continuously differentiable function $f$ on a neighborhood of $0$ converge to the Taylor polynomial of $f$. This example shows that the assumptions of theorem \ref{th:main}, even in the simple case of theorem \ref{th:afftrans}, are weaker that those given in \cite[proposition 2.1]{bloomcalvi}. Indeed the assumption $$\|(t,t)\|^2 \cdot \left|\ell\left(\Theta^{(s)}, (t,t), \cdot\right)\right| \rightarrow 0, \quad t\to 0,$$
is required in that proposition whereas it clearly does not hold here since, as is easily checked, 
$$\ell\left(\Theta^{(s)}, (t,t), x\right)={\frac{x_2-u\,x_1+\left(t^2+t\right)\,u-t}{t^2\,u}}.$$

\medskip

We now give an example showing that convergence to the Taylor projector no longer holds, in general, when condition (C2) is not satisfied. We use a computation done in \cite[example 1.2.]{bloomcalvi}. We fix $\epsilon \geq 0$ and define  $$\Theta_{\HH^{(s)}}=\left\{(0,0),(t,t^{2+\varepsilon}), (2t,0)\right\}\subset \RR^2, \quad t=1/s,\quad s\in \NN^\star.$$  
This lattice satisfies (C1) but not (C2) and it is readily checked that 
$$\lag \left[\Theta_{\HH^{(s)}}\, ;\, X^{(2,0)}\right](x)=2tx_1-\frac{x_2}{t^\varepsilon}, $$
which clearly does not converge to $\tay^1_0(X^{(2,0)})=0$ as $s=1/t\rightarrow\infty$. The case $\epsilon=0$ shows that that the Lagrange polynomials may converge to a limit different from the Taylor polynomial.

\section{Further properties of Chung-Yao lattices and proof of theorem \ref{th:main}}
\subsection{de Boor's identity} 
In the following $\HH$ always denotes a set of $d\geq N$ hyperplanes in general position in $\RR^N$ and $\Theta=\Theta_\HH$ the corresponding Chung-Yao lattice. We will always assume that
\begin{equation} \HH=\{\ell_1, \dots, \ell_d\}. \end{equation}
The elements of $\HH$ are ordered according to the indexes. Every subset of $\HH$ is endowed with the induced ordering.
  
  \smallskip
  
  If $K$ is a subset of $N-1$ elements in $\HH$, that is $K\in \binom{\HH}{N-1}$, 
 then $\cap_{\ell\in K} \ell$ is a line in $\RR^N$ which contains $d-N+1$ points of $\Theta$. Indeed, it passes through every $\vartheta_H$ such that 
 $H\in \binom{\HH}{N}$, $K\subset H$. The set of these $d-N+1$ points is denoted by $\Theta_K$, 
 \begin{equation}\label{eq:defthetaK} \Theta_K=\Theta \cap \left(\bigcap_{\ell\in K} \ell\right), \quad K\in \binom{\HH}{N-1}. \end{equation}
 
Assume that $K=\{\ell_{i_1}, \dots, \ell_{i_{N-1}}\}$ with $i_1<i_2<\dots<i_{N-1}$. Since the map
\begin{equation}\label{eq:deflinforfornK} v\in \RR^N \mapsto \det(v, \nn_{i_1}, \dots, \nn_{i_{N-1}})\end{equation}
is a linear form, there exists a vector, which we denote by $\nn_K$, such that
  \begin{equation}\label{eq:defnK} \det(v,\nn_{i_1}, \dots, \nn_{i_{N-1}})=\langle v , \nn_K\rangle, \quad v\in \RR^N. \end{equation}
 
As defined, the value of $\nn_K$ depends on the ordering of the hyperplanes of $K$. A different ordering may change $\nn_K$ in $-\nn_K$. It is to avoid further discussion of this detail that we assumed we start with a particular ordering of $\HH$ and agreed that every subset of $\HH$ is endowed with the induced ordering.
 
\begin{lemma} If $K=\{\ell_{i_1}, \dots, \ell_{i_{N-1}}\}$ then the direction of the line  $\cap_{\ell\in K} \ell$ is given by the (nonzero) vector $\nn_K$. We have $\|\nn_K\|\leq 1$. \end{lemma}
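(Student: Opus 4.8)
The plan is to exploit the defining property (\ref{eq:defnK}) of $\nn_K$ as a generalized cross product of the normals $\nn_{i_1},\dots,\nn_{i_{N-1}}$, reducing all three assertions to elementary determinant identities. First I would record that these normals are linearly independent: choosing any $\ell\in\HH\setminus K$ (possible since $d>N-1$), the set $K\cup\{\ell\}$ belongs to $\binom{\HH}{N}$ and is therefore in general position, so its $N$ normals form a basis of $\RR^N$; in particular the $N-1$ normals indexed by $K$ are independent. This is what guarantees that $\nn_K$ does not degenerate.

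To see that $\nn_K\neq 0$, I would pick any $v$ outside the $(N-1)$-dimensional span of $\nn_{i_1},\dots,\nn_{i_{N-1}}$; then $\det(v,\nn_{i_1},\dots,\nn_{i_{N-1}})\neq 0$, so by (\ref{eq:defnK}) we get $\langle v,\nn_K\rangle\neq 0$ and hence $\nn_K\neq 0$. For the direction claim, I would substitute $v=\nn_{i_k}$ into (\ref{eq:defnK}) for each $k$: the resulting determinant has a repeated column and therefore vanishes, giving $\langle \nn_{i_k},\nn_K\rangle=0$ for every $k$. Thus $\nn_K$ is a nonzero vector orthogonal to every $\nn_{i_j}$. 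Since a point $x_0+tw$ remains on the line $\cap_{\ell\in K}\ell$ for all $t$ exactly when $\langle \nn_{i_j},w\rangle=0$ for each $j$, and the space of such $w$ is one-dimensional (the orthogonal complement of the span of the independent normals), the vector $\nn_K$ indeed spans the direction of the line.

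The last assertion, the bound $\|\nn_K\|\leq 1$, is the only step requiring a genuine computation. I would evaluate $\|\nn_K\|^2=\langle \nn_K,\nn_K\rangle$ by taking $v=\nn_K$ in (\ref{eq:defnK}), obtaining $\|\nn_K\|^2=\det(\nn_K,\nn_{i_1},\dots,\nn_{i_{N-1}})$. Because $\nn_K$ is orthogonal to the span of the remaining columns, this determinant factors as $\|\nn_K\|$ times the $(N-1)$-dimensional volume of the parallelotope spanned by $\nn_{i_1},\dots,\nn_{i_{N-1}}$; cancelling $\|\nn_K\|$ leaves $\|\nn_K\|$ equal to that volume. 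Hadamard's inequality then bounds the volume by $\prod_{j=1}^{N-1}\|\nn_{i_j}\|=1$, since each normal is a unit vector, which yields the claim. (Equivalently, one may identify $\|\nn_K\|^2$ with the Gram determinant $\det\big(\langle\nn_{i_j},\nn_{i_k}\rangle\big)_{j,k}$ and invoke Hadamard directly.) The only point demanding care is the orthogonal factorization of the determinant, which is precisely what turns the algebraic quantity $\|\nn_K\|^2$ into a true volume and makes the unit-norm hypothesis effective.
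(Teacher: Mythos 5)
Your proof is correct and takes essentially the same approach as the paper: the direction claim follows from the repeated-column relations $\langle \nn_{i_j},\nn_K\rangle=0$, and the norm bound from Hadamard's inequality applied to the determinant defining $\nn_K$. The only (welcome) refinements are that you make explicit why $\nn_K\neq 0$, which the paper leaves implicit, and that you pass through the exact identity $\|\nn_K\|=\vol_{N-1}(\nn_{i_1},\dots,\nn_{i_{N-1}})$ before invoking Hadamard, whereas the paper bounds the operator norm of the linear form $v\mapsto \det(v,\nn_{i_1},\dots,\nn_{i_{N-1}})$ directly; both routes are sound.
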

\begin{proof} The first claim is a consequence of the equations  $$\langle \nn_{i_j}, \nn_K\rangle=0, \quad j=1,\ldots, N-1,$$ which follows readily from (\ref{eq:defnK}). Next, by Hadamard's inequality, the norm of the linear form (\ref{eq:deflinforfornK}) is smaller than the product of the $\|\nn_{i_j}\|$'s which is smaller than one. Hence, in view of (\ref{eq:defnK} ), so is the norm of $\nn_K$.  \end{proof}

The vectors $\nn_K$ play a fundamental role in our proof of theorem \ref{th:main}. 

Note in particular that if $H\in \binom{\HH}{N}$ and $\ell\in H$ then we may speak of $\nn_{H\setminus \ell}$. From now on, we use $H\setminus \ell_i$ for $H\setminus \{\ell_i\}$. 
\begin{lemma}[de Boor's identity] If $H\in \binom{\HH}{N}$ then we have 
\begin{equation}\label{coordinate}
x=\vartheta_H+\sum_{\ell\in H}\frac{\ell (x)}{\tell(\nn_{H\setminus \ell})}\; \nn_{H\setminus \ell}, \quad x\in\RR^N,\end{equation}
where $\tell$ denotes the linear part of $\ell$ (thus $\tell(x)=\langle \nn, x\rangle$ if $\ell(x)=\langle \nn, x \rangle-c$). In particular, for every $H$, the vectors $\nn_{H\setminus \ell}$, $\ell\in H$, form a basis of $\RR^N$. 
\end{lemma}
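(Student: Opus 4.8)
The plan is to verify the identity by exploiting the (near-)orthogonality relations between the normals $\nn_{i_m}$ of the hyperplanes in $H$ and the direction vectors $\nn_{H\setminus\ell}$, and then to read off the basis statement directly from the identity itself. Write $H=\{\ell_{i_1},\dots,\ell_{i_N}\}$ with $i_1<\dots<i_N$, so that $\ell_{i_k}(x)=\langle\nn_{i_k},x\rangle-c_{i_k}$ and $\tell_{i_k}(v)=\langle\nn_{i_k},v\rangle$.

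First I would record two facts about the pairings $\langle\nn_{i_m},\nn_{H\setminus\ell_{i_k}}\rangle$. When $m\neq k$ the index $i_m$ still belongs to $H\setminus\ell_{i_k}$, so by the defining relation (\ref{eq:defnK}) this pairing equals a determinant with two equal columns and hence vanishes; this is exactly the orthogonality already established in the preceding lemma. When $m=k$ we have $\langle\nn_{i_k},\nn_{H\setminus\ell_{i_k}}\rangle=\tell_{i_k}(\nn_{H\setminus\ell_{i_k}})=\pm\det(\nn_{i_1},\dots,\nn_{i_N})$, which is nonzero because $H$ is in general position. In particular all the denominators appearing in (\ref{coordinate}) are well defined and nonzero.

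With these relations in hand I would prove the identity directly, by showing that its right-hand side, call it $w$, satisfies $\ell_{i_m}(w)=\ell_{i_m}(x)$ for every $m=1,\dots,N$. Indeed $\ell_{i_m}(\vartheta_H)=0$, and applying $\langle\nn_{i_m},\cdot\rangle$ to the sum kills every term except $k=m$, so the surviving contribution is $\frac{\ell_{i_m}(x)}{\tell_{i_m}(\nn_{H\setminus\ell_{i_m}})}\langle\nn_{i_m},\nn_{H\setminus\ell_{i_m}}\rangle=\ell_{i_m}(x)$. Since $\det(\nn_{i_1},\dots,\nn_{i_N})\neq 0$, the affine map $y\mapsto(\ell_{i_1}(y),\dots,\ell_{i_N}(y))$ is an isomorphism of $\RR^N$, so two points with the same image coincide; hence $w=x$, which is (\ref{coordinate}).

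Finally, for the last assertion, the identity exhibits every $x-\vartheta_H$, hence every vector of $\RR^N$, as a linear combination of the $N$ vectors $\nn_{H\setminus\ell}$, $\ell\in H$; being a spanning family of the right cardinality, they form a basis. I expect no genuine obstacle here: the only point demanding care is the nonvanishing of the diagonal pairing $\tell_{i_k}(\nn_{H\setminus\ell_{i_k}})$, which is precisely where the general-position hypothesis enters and which simultaneously guarantees that (\ref{coordinate}) is meaningful.
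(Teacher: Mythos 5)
Your proof is correct. Note, however, that the paper does not prove this lemma at all: it simply cites de Boor's paper (``See \cite[p.~37]{deboor}''), so your argument is a genuine, self-contained replacement rather than a variant of an in-paper proof. What you give is the natural dual-basis (biorthogonality) argument, and it is essentially the computation underlying de Boor's own proof: the off-diagonal pairings $\langle \nn_{i_m}, \nn_{H\setminus \ell_{i_k}}\rangle$, $m\neq k$, vanish because the defining determinant (\ref{eq:defnK}) then has a repeated column (this is exactly the orthogonality relation from the paper's preceding lemma), while the diagonal pairing $\tell_{i_k}(\nn_{H\setminus \ell_{i_k}})=\pm\det(\nn_{i_1},\dots,\nn_{i_N})$ is nonzero precisely by the general-position hypothesis, which legitimizes the denominators in (\ref{coordinate}). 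Your device for concluding $w=x$ --- observing that $y\mapsto(\ell_{i_1}(y),\dots,\ell_{i_N}(y))$ is an affine isomorphism since its linear part has matrix built from the independent normals --- is clean and avoids any explicit inversion; one could equally say that $w-x$ is orthogonal to the $N$ independent vectors $\nn_{i_1},\dots,\nn_{i_N}$ and hence zero. Two small points worth making explicit if this were written up: the sign ambiguity in $\nn_{H\setminus\ell}$ (coming from the ordering convention) is harmless because the ratio $\nn_{H\setminus\ell}/\tell(\nn_{H\setminus\ell})$ is invariant under $\nn_{H\setminus\ell}\mapsto-\nn_{H\setminus\ell}$; and your final basis argument (a spanning family of cardinality $N$ in $\RR^N$ is a basis) correctly delivers the ``in particular'' clause of the statement.
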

 
 \begin{proof} See \cite[p. 37]{deboor}. \end{proof}

\subsection{de Boor's remainder formula}\label{sec:deBoorrf}
We now recall the definition of multivariate divided differences. Let $\Omega$ be an open convex set in $\RR^N$, to every set $A=\{a_0, \ldots,a_s\}\subset\Omega$ (the points are not necessarily distinct) and $f\in C^s(\Omega)$, we associate a $s$-linear form on $(\RR^N)^s$ defined by
\begin{multline}(\RR^N)^s\ni (v_1,\dots,v_s)  \longmapsto \\ [a_0,\ldots,a_s|v_1,\dots,v_s]f:=\int\limits_{[A]}D_{v_1}\ldots D_{v_s}f=\int\limits_{[A]}f^{(s)}(\cdot)(v_1,\dots,v_s), \end{multline}
where $f^{(s)}$ denotes the $s$-th total derivative of $f$,
$$\int\limits_{[A]}g=\int\limits_{\Delta_s} g\left(a_0+\sum_{i=1}^s \xi_i(a_i-a_0)\right)d\xi_1\dots d\xi_s$$
and $\Delta_s$ is the standard simplex $\{\xi=(\xi_1,\ldots,\xi_s): \xi_i\geq 0,\; \sum_{i=1}^s \xi_i\leq 1\}.$ This symmetric $s$-linear form is called the \emph{multivariate divided difference} of $f$ at $A$. Note that, when $f\in C^s(\Omega)$ is fixed, the function
$$\Omega^{s+1}\times (\RR^N)^s\ni (a_0,\ldots,a_s , v_1,\ldots,v_s)\longmapsto [a_0,\ldots, a_s\,|\, v_1,\ldots,v_s]f$$
is continuous (as a function of its two groups of variables). 
\par
We now state a beautiful error formula due to Carl de Boor.
\begin{theorem}[de Boor's remainder formula]\label{errordeboor}
Let $\HH=\{\ell_1,\dots,\ell_d\}$ be a collection of $d\geq N$ hyperplanes in general position in $\RR^N$ and $\Theta=\Theta_\HH$ the corresponding Chung-Yao lattice.  For $K\in \binom{\HH}{N-1}$, we define the polynomial $P_K$ of degree $d-N+1$ by the relation
\begin{equation}\label{eq:defPKdBoor} P_K(x)=\prod_{\ell\in \HH\setminus K}\frac{\ell(x)}{\tell (\nn_K)}, \end{equation}
where, as above, $\tilde{\ell}$ is used for the linear part of $\ell$. 
 \par
 The error between a function $f$ of class $C^{d-N+1}$ on a convex neighborhood $\Omega$ of $\Theta$ and the Lagrange interpolation polynomial of $f$ at $\Theta$ is given by the following formula. 
\begin{equation}
f(x)=\lag[\Theta; f](x)+\sum_{K\in\binom{\HH}{N-1}}P_K(x)\,\cdot\, \Big[\Theta_{K},x\, |\, \underbrace{\nn_K,\cdots,\nn_K}_{d-N+1}\Big]f,\quad x\in \Omega. \end{equation}
Recall that for $K\in \binom{\HH}{N-1}$, $\Theta_{K}$ is the subset formed by the $d-N+1$ points of $\Theta$ lying on the line $\cap_{\ell\in K}\ell$, see (\ref{eq:defthetaK}).
\end{theorem}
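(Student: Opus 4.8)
The plan is to identify $f-R$ with $\lag[\Theta;f]$, where
\[ R(x)=\sum_{K\in\binom{\HH}{N-1}}P_K(x)\,\big[\Theta_K,x\,|\,\nn_K,\dots,\nn_K\big]f \]
(with $d-N+1$ copies of $\nn_K$) denotes the asserted remainder. Since $\Theta$ is an interpolation lattice of degree $d-N$, the interpolant $\lag[\Theta;f]$ is the \emph{unique} polynomial of degree at most $d-N$ agreeing with $f$ on $\Theta$. Hence it suffices to prove two statements: (a) $R$ vanishes at every point of $\Theta$, so that $f-R$ interpolates $f$ on $\Theta$; and (b) $f-R\in\pol^{d-N}(\RR^N)$. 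Uniqueness of the interpolant then forces $f-R=\lag[\Theta;f]$.

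I would dispose of (a) first, as it is elementary. Fix $H\in\binom{\HH}{N}$. For any $K\in\binom{\HH}{N-1}$ we have $H\not\subseteq K$ since $|H|=N>N-1=|K|$, so there is a hyperplane $\ell\in H\setminus K\subseteq\HH\setminus K$; as $\vartheta_H\in\ell$ we get $\ell(\vartheta_H)=0$, and this very factor occurs in the product $P_K$ defined by (\ref{eq:defPKdBoor}). The divided differences being finite, it follows that $P_K(\vartheta_H)=0$ for \emph{every} $K$, whence $R(\vartheta_H)=0$ and $(f-R)(\vartheta_H)=f(\vartheta_H)$ for all $H$, which is (a).

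Statement (b) is the substance of the theorem, and I would establish it by induction on $d$. The base case $d=N$ is clean: then $\Theta=\{\vartheta_\HH\}$, $\lag[\Theta;f]=f(\vartheta_\HH)$, each $K=\HH\setminus\{\ell\}$ gives $\nn_K=\nn_{\HH\setminus\ell}$ and $\Theta_K=\{\vartheta_\HH\}$, and the identity reduces to $f(x)=f(\vartheta_\HH)+\sum_{\ell\in\HH}\frac{\ell(x)}{\tell(\nn_{\HH\setminus\ell})}\,[\vartheta_\HH,x\,|\,\nn_{\HH\setminus\ell}]f$, which follows from $f(x)-f(\vartheta_\HH)=\int_0^1 D_{x-\vartheta_\HH}f(\vartheta_\HH+t(x-\vartheta_\HH))\,dt$ upon expanding $x-\vartheta_\HH$ by de Boor's identity (\ref{coordinate}) in the basis $\{\nn_{\HH\setminus\ell}\}_{\ell\in\HH}$ with coefficients $\ell(x)/\tell(\nn_{\HH\setminus\ell})$ and recognizing each integral as $[\vartheta_\HH,x\,|\,\nn_{\HH\setminus\ell}]f$. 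For the inductive step I would remove the last hyperplane, setting $\HH'=\HH\setminus\{\ell_d\}$ with Chung-Yao lattice $\Theta'$ of degree $d-N-1$: along each line $\cap_{\ell\in K}\ell$ with $\ell_d\notin K$, the set $\Theta_K$ is obtained from $\Theta'_K$ by adjoining the single point $\vartheta_{K\cup\{\ell_d\}}=\big(\cap_{\ell\in K}\ell\big)\cap\ell_d$, and the idea is to insert this point into the divided differences supplied by the inductive hypothesis via the Newton-type recursion for multivariate divided differences taken in the fixed direction $\nn_K$. \emph{This is where the main obstacle lies.} The naive guess that inserting $\vartheta_{K\cup\{\ell_d\}}$ simply multiplies the next divided difference by $\ell_d(x)/\tell_d(\nn_K)$ is correct only when $x$ lies \emph{on} the line $\cap_{\ell\in K}\ell$; for general $x$ the recursion carries an extra correction measuring the displacement of $x$ off that line, and one must check that after summation these corrections assemble \emph{precisely} into the terms of $R$ indexed by the $K$ that contain $\ell_d$, while the collinear contributions (constant in $x$) combine with $\lag[\Theta';f]$ to rebuild $\lag[\Theta;f]$. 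Pinning down the exact recursion, tracking the two index families $K\ni\ell_d$ and $K\not\ni\ell_d$, and using the injectivity of $H\mapsto\vartheta_H$ to keep the adjoined points distinct is the technical heart; the degree count (each $P_K$ has degree $d-N+1$ while the inserted divided difference drops one order) is what keeps $f-R$ within $\pol^{d-N}$.

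One could streamline (b) by a preliminary reduction: both sides are linear in $f$ and continuous for the $C^{d-N+1}$ norm on a compact convex neighborhood of $\Theta$ (the remainder functionals factor through $f^{(d-N+1)}$ via the Hermite--Genocchi integral of Section \ref{sec:deBoorrf}), and polynomials are dense in that norm, so it would be enough to prove the formula for polynomial $f$, where the divided differences are themselves polynomials and $f-R\in\pol^{d-N}$ can, in principle, be checked degree by degree.
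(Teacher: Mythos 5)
Your overall framing is legitimate and its easy parts are correct: by uniqueness of the Lagrange interpolant it does suffice to prove (a) that the proposed remainder $R$ vanishes on $\Theta$ and (b) that $f-R\in\pol^{d-N}(\RR^N)$; your proof of (a) is complete (every $P_K$ contains a factor $\ell(\cdot)$ with $\ell\in H\setminus K$, which kills $\vartheta_H$); and your base case $d=N$ is correct, being exactly de Boor's identity (\ref{coordinate}) substituted into the integral mean-value representation of $f(x)-f(\vartheta_\HH)$. (For the record, the paper itself offers no internal proof of this theorem: it simply cites de Boor's article, so the comparison here is against what a complete proof would require rather than against a proof printed in the paper.)

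However, there is a genuine gap, and it sits exactly where you flag it: the inductive step of (b) is never carried out. Asserting that the off-line ``correction terms assemble precisely into the terms of $R$ indexed by the $K$ containing $\ell_d$'' is not a proof step; it is a restatement of the theorem after peeling off one hyperplane. The identity you would need is a nontrivial algebraic one of precisely the kind the paper proves separately, for its Taylor-remainder analogue, as lemma \ref{general_rep}: there the analogous reassembly, equation (\ref{eq:lemmaalg}), consumes several pages and requires de Boor's identity (\ref{coordinate}) applied with $H=K\cup\ell_{d+1}$, the orthogonality observation of lemma \ref{th:techobser}, and the homogeneous interpolation formula of corollary \ref{homo_rep}, which in turn rests on the fact (lemma \ref{homo_unisolvent}) that the vectors $\nn_K$ form an interpolation lattice for $\hpol^{d-N+1}(\RR^N)$. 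Your sketch supplies none of this machinery: the ``Newton-type recursion'' in the direction $\nn_K$ is never written down, the correction terms are never identified, and their telescoping is never verified. The closing reduction to polynomial $f$ by density is valid but does not remove the difficulty, since even for polynomial $f$ the claim that $f-R$ has degree at most $d-N$, ``checked degree by degree,'' is statement (b) all over again. So the proposal is a well-structured plan with correct peripheral steps, but the heart of the proof is missing.
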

\begin{proof} See \cite[theorem 3.1.]{deboor}. 
\end{proof} 
\subsection{Some algebraic identities}
We now prove two auxiliary lemmas. The first one (lemma \ref{homo_unisolvent}) shows that the $\nn_K$'s, $K\in\binom{\HH}{N-1}$, themselves form a certain interpolation lattice. The second one (lemma \ref{general_rep}) is a somewhat mysterious representation formula for symmetric multilinear forms.  
\begin{lemma}\label{homo_unisolvent}
Let $\HH=\{\ell_1,\dots,\ell_d\}$ be a collection of $d$ hyperplanes in general position in $\RR^N$ with $d\geq N$. The set \begin{equation} \mathcal V:=\left\{\nn_K: K\in\binom{\HH}{N-1}\right\}\end{equation} is an interpolation lattice for the space $\hpol^{d-N+1}(\RR^N)$ of homogeneous polynomials of degree $d-N+1$.
\end{lemma}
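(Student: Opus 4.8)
The plan is to prove unisolvence by exhibiting an explicit cardinal (Lagrange) basis for the nodes of $\mathcal V$, which yields existence and uniqueness at the same time. First I would record the matching of dimensions:
$$\dim \hpol^{d-N+1}(\RR^N) = \binom{N+(d-N+1)-1}{d-N+1} = \binom{d}{N-1} = \#\binom{\HH}{N-1} = \#\mathcal V,$$
so the number of nodes equals the dimension of the target space. It is therefore enough to produce, for each $K \in \binom{\HH}{N-1}$, a homogeneous polynomial of degree $d-N+1$ equal to $1$ at $\nn_K$ and to $0$ at every $\nn_{K'}$ with $K' \neq K$: the existence of such cardinal polynomials makes the evaluation map $Q \mapsto (Q(\nn_K))_K$ on $\hpol^{d-N+1}(\RR^N)$ surjective, hence (equal dimensions) bijective, so the Vandermonde determinant is nonzero and $\mathcal V$ is an interpolation lattice.

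The natural candidate is the leading homogeneous part of de Boor's polynomial $P_K$ of Theorem \ref{errordeboor}, obtained by dropping the constant terms of the affine forms $\ell$, namely
$$\hat P_K(x) := \prod_{\ell \in \HH \setminus K} \frac{\tell(x)}{\tell(\nn_K)}.$$
This is a product of $d-N+1$ nonzero linear forms, hence an element of $\hpol^{d-N+1}(\RR^N)$. Before going further I would check that the denominators do not vanish: writing $K=\{\ell_{i_1},\dots,\ell_{i_{N-1}}\}$, the defining relation (\ref{eq:defnK}) gives $\tell(\nn_K)=\det(\nn_\ell,\nn_{i_1},\dots,\nn_{i_{N-1}})$ for $\ell\in\HH\setminus K$, and this determinant is nonzero because $K\cup\{\ell\}$ is an $N$-element subset of $\HH$, hence in general position. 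So $\hat P_K$ is well defined.

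The key step is the cardinality identity $\hat P_K(\nn_{K'}) = \delta_{KK'}$. When $K'=K$ every factor equals $1$, so $\hat P_K(\nn_K)=1$. When $K'\neq K$, the equality $|K'|=|K|=N-1$ prevents the inclusion $K'\subseteq K$, so I may pick $\ell\in K'\setminus K\subseteq\HH\setminus K$; this $\ell$ contributes to the product the factor $\tell(\nn_{K'})$, which vanishes because $\nn_{K'}$ gives the direction of the line $\bigcap_{\ell''\in K'}\ell''$ and is therefore orthogonal to the normal of every hyperplane of $K'$ (the relations $\langle \nn_{i_j},\nn_{K'}\rangle=0$ recorded just before the lemma). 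Hence $\hat P_K(\nn_{K'})=0$, and the $\hat P_K$ form the desired cardinal basis, which closes the argument set up in the first paragraph.

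I expect the only real content to lie in this cardinality identity; once one recognizes that the homogeneous parts of de Boor's remainder polynomials are precisely the cardinal functions for the nodes $\nn_K$, the dimension count and the surjectivity-implies-bijectivity step are routine. Thus the main obstacle is conceptual rather than computational: guessing the correct candidate $\hat P_K$ and observing that the combinatorial fact that equal-size distinct sets cannot be nested, combined with the orthogonality $\langle \nn_{i_j},\nn_{K'}\rangle=0$, is exactly what produces the Kronecker delta.
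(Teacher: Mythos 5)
Your proof is correct and takes essentially the same route as the paper: the paper also uses the homogeneous polynomials $\tilde P_K=\prod_{\ell\in\HH\setminus K}\tell(\cdot)/\tell(\nn_K)$ as cardinal functions, with the identical argument that for $K'\neq K$ one can pick $\ell\in K'\cap(\HH\setminus K)$ so that the factor $\tell(\nn_{K'})=0$ kills the product. The only cosmetic difference is that the paper proves the distinctness of the $\nn_K$'s (i.e. $\#\mathcal V=\binom{d}{N-1}$) as a separate first step via general position, whereas you assert it up front; this is harmless since it is in fact a consequence of your Kronecker delta identity, because $\hat P_K(\nn_K)=1\neq 0=\hat P_K(\nn_{K'})$ forces $\nn_K\neq\nn_{K'}$.
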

\begin{proof} It suffices to prove the following two assertions. 
\begin{enumerate}
	\item The cardinality of $\mathcal{V}$ is equal to the dimension of $\hpol^{d-N+1}(\RR^N)$ which is $\binom{d}{d-N+1}=\binom{d}{N-1}$. 
	\item For every $\nn_K$ in $\mathcal{V}$ there exists $H_K\in \hpol^{d-N+1}(\RR^N)$ such that $H_K(\nn_K)=1$ but $H_K$ vanishes on $\mathcal V\setminus\{\nn_K\}$  
\end{enumerate}

\smallskip

To verify the first point, we just need to check that if $K, K'\in \binom{\HH}{N-1}$ and $K\ne K'$ then $\nn_{K}\ne \nn_{K'}$. But, if $K\ne K'$ there exists $\ell\in K\setminus K'$ with $\ell(x)=\langle \nn, x\rangle - c$. Assume that $K'=\{\ell_{i_1}, \dots, \ell_{i_{N-1}}\}$. Since $\ell \cup{K'}$ is a set of $N$ hyperplanes in general position, we have $\det(\nn, \nn_{i_1}, \dots, \nn_{i_{N-1}}) \neq 0$ hence, in view of (\ref{eq:defnK}), $\langle \nn, \nn_{K'}\rangle\neq 0$. On the other hand, since $\ell\in K$, $\langle \nn, \nn_{K}\rangle=0$. Hence $\nn_{K}\ne \nn_{K'}$. 

\smallskip

As for the second point, for $K\in \binom{\HH}{N-1}$, we set 
\begin{equation}\label{eq:defpK} H_K:= \tilde{P}_{K}(x)=\prod_{\ell\in \HH\setminus {K}}\frac{\tell(x)}{\tell (\nn_{K})}.\end{equation} 
This clearly defines a homogeneous polynomial of degree $d-N+1$ in $\RR^N$ satisfying $H_K(\nn_K)=1$. Moreover, if $K'\in \binom{\HH}{N-1}$, $K'\ne K$, then we can find $\ell$ in $(\HH\setminus K)\cap K'$. Since $\ell\not\in K$, the factor $\tilde{\ell}(\nn_{K'})$ appears in $H_{K}(\nn_{K'})$. However, since $\ell\in K'$, $\tilde{\ell}(\nn_{K'})=\langle \nn, \nn_{K'}\rangle =0$. Hence $H_{K}(\nn_{K'})=0$.
\end{proof}
The interpolation formula corresponding to the interpolation lattice --- and using the polynomials $H_K=\tilde{P}_K$ in (\ref{eq:defpK}) --- yields the following identity. 
 
\begin{corollary}\label{homo_rep}
With the assumptions of the lemma, for every  symmetric $(d-N+1)$-linear form $\phi$ on $\RR^N$, we have
\begin{equation} \phi(v^{d-N+1})=\sum_{K\in\binom{\HH}{N-1}}\tilde{P}_K(v)\,\cdot\,\phi(\nn_{K}^{d-N+1}),\quad v\in\RR^N, \end{equation}
 where we use $u^{d-N+1}:=(u,\dots,u)$ ($d-N+1$ times).
\end{corollary}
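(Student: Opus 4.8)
The plan is to observe that the map $v\mapsto \phi(v^{d-N+1})$ obtained by evaluating the symmetric multilinear form $\phi$ on the diagonal is itself a homogeneous polynomial of degree $d-N+1$, and then to apply directly the interpolation formula provided by lemma \ref{homo_unisolvent}: the stated identity is simply its explicit form.

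First I would set $Q(v):=\phi(v^{d-N+1})=\phi(v,\dots,v)$ and check that $Q\in\hpol^{d-N+1}(\RR^N)$. Writing $v=\sum_{j=1}^N v_j e_j$ in the canonical basis and using the multilinearity of $\phi$, one obtains
\begin{equation*}
Q(v)=\sum_{1\leq j_1,\dots,j_{d-N+1}\leq N} v_{j_1}\cdots v_{j_{d-N+1}}\;\phi(e_{j_1},\dots,e_{j_{d-N+1}}),
\end{equation*}
which is plainly homogeneous of degree $d-N+1$ in the coordinates of $v$.

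Next, lemma \ref{homo_unisolvent} asserts that $\mathcal V$ is an interpolation lattice for $\hpol^{d-N+1}(\RR^N)$, and its proof exhibits the polynomials $\tilde P_K$ of (\ref{eq:defpK}) as the associated Lagrange fundamental polynomials, since $\tilde P_K(\nn_{K'})=\delta_{K,K'}$. Applying the interpolation formula to $Q$ then gives $Q(v)=\sum_{K\in\binom{\HH}{N-1}} Q(\nn_K)\,\tilde P_K(v)$ for every $v\in\RR^N$, and substituting the value $Q(\nn_K)=\phi(\nn_K^{d-N+1})$ yields exactly the claimed representation. There is no real obstacle here: the only point deserving attention is the elementary verification that $Q$ is a genuine homogeneous polynomial of the right degree, after which the corollary is nothing but the uniqueness of interpolation at the lattice $\mathcal V$.
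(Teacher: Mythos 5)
Your proposal is correct and follows exactly the paper's (implicit) argument: the paper derives the corollary by remarking that the interpolation formula for the lattice $\mathcal{V}$, with the $\tilde{P}_K$ serving as the Lagrange fundamental polynomials, applied to the homogeneous polynomial $v\mapsto \phi(v^{d-N+1})$, gives the identity. Your only addition is the explicit (and correct) verification that the diagonal restriction of $\phi$ is indeed a homogeneous polynomial of degree $d-N+1$, a point the paper leaves tacit.
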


\begin{lemma}\label{general_rep}
Let $\HH=\{\ell_1,\ldots,\ell_d\}$ be a collection of hyperplanes in $\RR^N$ in general position with $d\geq N$. We set 
\begin{equation} \HH_i= \{\ell_1,\ldots,\ell_{i}\}, \quad 1\leq i\leq d,\end{equation}
and  
\begin{equation} P^{[i-1]}_K(x)=\prod_{\ell\in \HH_{i-1}\setminus K}\frac{\ell(x)}{\tell(\nn_K)},\quad K\in \binom{\HH_{i-1}}{N-1}, \quad N\leq i \leq d+1. \end{equation} 
Then for every symmetric $(d-N+1)$-linear form $\phi$ on $\RR^N$, we have
\begin{equation}\label{eq:kindofnewton} \phi(x^{d-N+1})=\sum_{i=N}^{d+1}\sum_{K\in \binom{\HH_{i-1}}{N-1}} P^{[i-1]}_K(x)\, \cdot \, \phi\Big(x^{d-i}\,,\; \vartheta_{K\cup\ell_i}\,,\,\nn_K^{i-N}\Big), \quad x\in \RR^N.\end{equation}
\end{lemma}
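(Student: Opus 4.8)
The plan is to prove (\ref{eq:kindofnewton}) by a telescoping argument that lifts to the multivariate setting the classical one–dimensional Newton identity. In $\RR^1$, writing $\ell_j(x)=x-a_j$, the analogue of (\ref{eq:kindofnewton}) reads $x^d=\sum_{i=1}^d a_i\,x^{d-i}\prod_{j<i}(x-a_j)+\prod_{j=1}^d(x-a_j)$, and it is proved by telescoping the quantities $Q_j=x^{d-j}\prod_{j'\le j}(x-a_{j'})$: the trivial splitting $x=(x-a_i)+a_i$ gives exactly $Q_{i-1}-Q_i=a_i\,x^{d-i}\prod_{j<i}(x-a_j)$. De Boor's coordinate identity (\ref{coordinate}) is the multivariate substitute for $x=(x-a_i)+a_i$, and the whole proof consists in running the same telescoping while keeping track of the extra terms that (\ref{coordinate}) produces.

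Concretely, I would first introduce, for $N-1\le j\le d$, the partial sums
\[
Q_j:=\sum_{K\in\binom{\HH_j}{N-1}}P^{[j]}_K(x)\,\phi\big(x^{d-j},\,\nn_K^{\,j-N+1}\big),
\]
with the convention that a zero exponent means the corresponding argument is absent. A direct check gives the two end values: for $j=N-1$ the only admissible $K$ is $\HH_{N-1}$, $P^{[N-1]}_K\equiv 1$, and $Q_{N-1}=\phi(x^{d-N+1})$, the left–hand side; for $j=d$ one has $P^{[d]}_K=P_K$ and $Q_d=\sum_{K\in\binom{\HH}{N-1}}P_K(x)\,\phi(\nn_K^{\,d-N+1})$, which is precisely the $i=d+1$ summand of (\ref{eq:kindofnewton}). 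Hence it suffices to establish the one–step identity
\[
Q_{i-1}-Q_i=\sum_{K\in\binom{\HH_{i-1}}{N-1}}P^{[i-1]}_K(x)\,\phi\big(x^{d-i},\,\vartheta_{K\cup\ell_i},\,\nn_K^{\,i-N}\big),\qquad N\le i\le d,
\]
for then summing over $i$ telescopes $Q_{N-1}-Q_d$ into the asserted right–hand side.

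To prove the one–step identity I would start from $Q_{i-1}$, write out one of the $d-i+1$ available factors of $x$, i.e. $\phi(x^{d-i+1},\nn_K^{\,i-N})=\phi(x,\,x^{d-i},\,\nn_K^{\,i-N})$, and expand that single $x$ by de Boor's identity (\ref{coordinate}) relative to $H=K\cup\ell_i$:
\[
x=\vartheta_{K\cup\ell_i}+\frac{\ell_i(x)}{\tell_i(\nn_K)}\,\nn_K+\sum_{\ell\in K}\frac{\ell(x)}{\tell(\nn_{(K\cup\ell_i)\setminus\ell})}\,\nn_{(K\cup\ell_i)\setminus\ell}.
\]
Multilinearity of $\phi$ then breaks $Q_{i-1}$ into three groups. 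The $\vartheta_{K\cup\ell_i}$ group is, by symmetry of $\phi$, exactly the claimed right–hand side. The $\nn_K$ group is harmless for a pleasant reason: $P^{[i-1]}_K(x)\cdot\ell_i(x)/\tell_i(\nn_K)=P^{[i]}_K(x)$ (one simply restores the missing factor $\ell_i$), so this group reproduces precisely the part of $Q_i$ indexed by those $K'\in\binom{\HH_i}{N-1}$ with $\ell_i\notin K'$.

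The hard part will be the third group, the $N-1$ \emph{cross terms} coming from the directions $\nn_{(K\cup\ell_i)\setminus\ell}$, $\ell\in K$, which have no one–dimensional counterpart. I would reindex them by $K'':=K\setminus\ell\in\binom{\HH_{i-1}}{N-2}$, so that $(K\cup\ell_i)\setminus\ell=K''\cup\ell_i$, and show that, for each fixed $K''$, the sum over $\ell\in\HH_{i-1}\setminus K''$ collapses to the single term $P^{[i]}_{K''\cup\ell_i}(x)\,\phi(x^{d-i},\nn_{K''\cup\ell_i}^{\,i-N+1})$ — i.e. exactly the part of $Q_i$ indexed by the $K'\ni\ell_i$. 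This collapse is the technical heart of the argument: it reduces to a linear–algebra identity among the direction vectors $\{\nn_{K''\cup\ell}\}$ lying in the $2$–plane $\bigcap_{\ell\in K''}\ell$, which I expect to derive from the homogeneous representation of Corollary \ref{homo_rep} (equivalently, the unisolvence of Lemma \ref{homo_unisolvent}) applied within that plane, together with a careful reconciliation of the normalizing denominators $\tell'(\nn_{K''\cup\ell})$. Verifying this recombination — rather than the telescoping scaffolding, which is routine — is where the bulk of the bookkeeping and the main difficulty lie.
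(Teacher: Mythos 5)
Your proposal is correct and follows essentially the same route as the paper: the paper proves (\ref{eq:kindofnewton}) by induction on $d$, and its inductive step is exactly your one-step identity at the top index (the spectator arguments being frozen by applying the hypothesis to $\phi_y$ and then setting $y=x$), established just as you describe --- de Boor's identity (\ref{coordinate}) with $H=K\cup\ell_{d+1}$, the three-group split, and the collapse of the cross terms. The recombination you defer is precisely the paper's computation (\ref{eqn4})--(\ref{eqn7}): one factors $P^{[d]}_K(x)\,\ell(x)/\tell(\nn_{(K\setminus\ell)\cup\ell_{d+1}})=P^{[d+1]}_{(K\setminus\ell)\cup\ell_{d+1}}(x)\cdot\tilde{P}^{[d]}_K(\nn_{(K\setminus\ell)\cup\ell_{d+1}})$, extends the inner sum to all $K\in\binom{\HH_d}{N-1}$ by the vanishing observation of lemma \ref{th:techobser}, and applies corollary \ref{homo_rep} globally in $\RR^N$ (no passage to the $2$-plane is needed), so your plan does go through.
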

In the above formula, we agree that when $d-i$ (resp. $i-N$) is not positive then $x$ (resp. $\nn_K$) does not appear in $\phi(x^{d-i},\vartheta_{K\cup\ell_i},\nn_K^{i-N})$,
	 and when $i=d+1$ then $x$ and $\vartheta_{K\cup\ell_i}$ do not appear.
 Likewise, if the product in the definition of $P^{[i-1]}_K$ is empty then its value is taken to be $1$.  
 
\smallskip

We need the following simple observation.

\begin{lemma}\label{th:techobser}Let $\HH=\{\ell_1,\ldots,\ell_d+1\}$ be a collection of hyperplanes in $\RR^N$ in general position with $d\geq N$. As above, we write $\HH_d=\{\ell_1,\ldots,\ell_d\}$. Let $K'\in\binom{\HH_d}{N-2}$. If $K\in\binom{\HH_d}{N-1}$ and $K'\nsubseteq K$ then $\tilde{P}^{[d]}_K(\nn_{K'\cup\ell_{d+1}})=0$ where $$\tilde{P}^{[d]}_K= \prod_{\ell\in \HH_{d}\setminus K}\frac{\tilde{\ell}(\cdot)}{\tell(\nn_K)}.$$
\end{lemma}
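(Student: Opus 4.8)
The plan is to exhibit a single factor in the product defining $\tilde{P}^{[d]}_K$ which vanishes when evaluated at $\nn_{K'\cup\ell_{d+1}}$; this immediately forces the whole product to be zero. First note that, since $\ell_{d+1}\notin\HH_d\supseteq K'$, the set $K'\cup\{\ell_{d+1}\}$ has exactly $N-1$ elements, so $K'\cup\ell_{d+1}\in\binom{\HH}{N-1}$ and $\nn_{K'\cup\ell_{d+1}}$ is well defined through (\ref{eq:defnK}). Everything then rests on the orthogonality relations satisfied by the vectors $\nn_J$: by (\ref{eq:defnK}) and the lemma immediately following it, for any $J\in\binom{\HH}{N-1}$ the vector $\nn_J$ is orthogonal to the normal of every hyperplane in $J$; equivalently, if $\ell\in J$ with normal $\nn$, then $\tell(\nn_J)=\langle\nn,\nn_J\rangle=0$.

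Next I would use the hypothesis $K'\nsubseteq K$ to select a hyperplane $\ell\in K'$ with $\ell\notin K$. Since $K'\subseteq\HH_d$, this $\ell$ lies in $\HH_d\setminus K$, so the corresponding factor $\tell(\cdot)/\tell(\nn_K)$ is one of the factors indexed by $\HH_d\setminus K$ in $\tilde{P}^{[d]}_K$. One should record that this factor is well defined, i.e. that its denominator $\tell(\nn_K)$ is nonzero: this is precisely the non-degeneracy already used in the first part of the proof of lemma \ref{homo_unisolvent}, because $\{\ell\}\cup K$ is a family of $N$ hyperplanes in general position, so that $\langle\nn,\nn_K\rangle\neq 0$.

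Finally I would evaluate this factor at $\nn_{K'\cup\ell_{d+1}}$. As $\ell\in K'\subseteq K'\cup\{\ell_{d+1}\}$, the orthogonality relation recalled above gives $\tell(\nn_{K'\cup\ell_{d+1}})=\langle\nn,\nn_{K'\cup\ell_{d+1}}\rangle=0$. Hence one factor of $\tilde{P}^{[d]}_K(\nn_{K'\cup\ell_{d+1}})$ vanishes while the product is non-empty (as $d\geq N$), so the whole product is zero, which is the desired conclusion.

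The argument is short and presents no real obstacle; it is essentially bookkeeping with the three index sets $K'$, $K$ and $K'\cup\{\ell_{d+1}\}$. The only point deserving care is the double role of the selected hyperplane $\ell$: the hypothesis $K'\nsubseteq K$ is exactly what places $\ell$ simultaneously in $\HH_d\setminus K$ (so that it indexes a genuine factor of $\tilde{P}^{[d]}_K$) and in the defining set of $\nn_{K'\cup\ell_{d+1}}$ (so that this factor vanishes). It is worth noting that $\ell_{d+1}$ plays no active role beyond enlarging $K'$ to a set of $N-1$ hyperplanes.
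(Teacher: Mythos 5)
Your proof is correct and follows essentially the same route as the paper: both select a hyperplane $\ell\in K'\cap(\HH_d\setminus K)$, use the orthogonality $\tell(\nn_{K'\cup\ell_{d+1}})=0$ coming from $\ell\in K'$, and observe that this vanishing factor appears in the product since $\ell\notin K$. The extra checks you include (well-definedness of $\nn_{K'\cup\ell_{d+1}}$ and non-vanishing of the denominators $\tell(\nn_K)$ via general position) are sound but are taken for granted in the paper's one-line argument.
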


\begin{proof} Take $\ell_i\in K'\cap (\HH_d\setminus K)$. The fact that $\ell_i\in K'$ gives $$0=\langle \nn_i, \nn_{K'\cup \ell_{d+1}}\rangle= \tilde{\ell}_i(\nn_{K'\cup \ell_{d+1}})$$ and since $\ell_i\not\in K$,  $\tilde{\ell}_i(\nn_{K'\cup \ell_{d+1}})$ is a factor of 
$\tilde{P}^{[d]}_K(\nn_{K'\cup\ell_{d+1}})$. 
\end{proof}

\begin{proof}[Proof of lemma \ref{general_rep}] We prove identity (\ref{eq:kindofnewton}) by induction on $d\geq N$.

\smallskip

(A) We start with the case $d=N$. In that case (\ref{eq:kindofnewton}) reduces to
\begin{align} \phi(x)&= \sum_{K\in \binom{\HH_{N-1}}{N-1}} P^{[N-1]}_K(x) \phi(\vartheta_{K\cup \ell_N}) +\sum_{K\in \binom{\HH_{N}}{N-1}} P^{[N]}_K(x) \phi(\nn_K)\\
&= \phi(\vartheta_{\HH_N})+ \sum_{i=1}^N \frac{\ell_i(x)}{\tilde{\ell_i}(\nn_{\HH_N\setminus\ell_i})}  \phi(\nn_{\HH_N\setminus \ell_i}).\end{align}
Since $\phi$ is a linear form, the claim follows from de Boor's identity (\ref{coordinate}).

\par \smallskip

(B) We assume that (\ref{eq:kindofnewton}) holds true for $d$ and prove it for $d+1$. Take $\phi$ a symmetric $(d+2-N)$-linear form. Fix $y\in \RR^N$ and define $\phi_y$ on $(\RR^N)^{d+1-N}$ by $\phi_y(v_1,\dots,v_{d+1-N})=\phi(v_1,\dots,v_{d+1-N},y)$. 
Thus $\phi_y$ is a symmetric $(d+1-N)$-linear form to which we may apply 
the induction hypothesis to get
\begin{equation} \phi_y(x^{d-N+1})=\sum_{i=N}^{d+1}\sum_{K\in \binom{\HH_{i-1}}{N-1}} P^{[i-1]}_K(x) \phi_y \left(x^{d-i},\vartheta_{K\cup \ell_i}, \nn_K^{i-N}\right).  
\end{equation}
Putting $y=x$ in the above expression, we obtain
\begin{equation}\label{eq:whatweknow} \phi(x^{d-N+2})=\sum_{i=N}^{d+1}\sum_{K\in \binom{\HH_{i-1}}{N-1}} P^{[i-1]}_K(x) \phi_x\left(x^{d-i},\vartheta_{K\cup \ell_i}, \nn_K^{i-N}\right).  
\end{equation}
On the other hand, we need to prove 
\begin{equation}\label{eq:whatwewant} \phi(x^{d-N+2})=\sum_{i=N}^{d+2}\sum_{K\in \binom{\HH_{i-1}}{N-1}} P^{[i-1]}_K(x) \phi\left(x^{d+1-i},\vartheta_{K\cup \ell_i}, \nn_K^{i-N}\right).  
\end{equation}

Expressions (\ref{eq:whatweknow}) and (\ref{eq:whatwewant}) differ only for $i=d+1$ and $i=d+2$. Thus, to establish (\ref{eq:whatwewant}), it suffices to prove 
that the term corresponding to $d+1$ in (\ref{eq:whatweknow}) equals the sum of the terms corresponding to $d+1$ and $d+2$ in (\ref{eq:whatwewant}), that is
  \begin{multline}\label{eq:lemmaalg} \sum_{K\in \binom{\HH_{d}}{N-1}} P^{[d]}_K(x) \phi\left(\nn_K^{d+1-N},x\right) \\ =
  \sum_{K\in \binom{\HH_{d}}{N-1}} P^{[d]}_K(x) \phi\left(\vartheta_{K\cup \ell_{d+1}}, \nn_K^{d+1-N}\right) + \sum_{K\in \binom{\HH_{d+1}}{N-1}} P^{[d+1]}_K(x) \phi\left(\nn_K^{d+2-N}\right).
  \end{multline}
  For $K\in \binom{\HH_{d}}{N-1}$ and $x\in \RR^N$,  using de Boor's identity (\ref{coordinate}) with $H=K\cup \ell_{d+1}$, we may write
  \begin{align*}x&=\vartheta_{K\cup \ell_{d+1}} + \sum_{\ell\in K\cup \ell_{d+1}}  \frac{\ell(x)}{\tilde{\ell}(\nn_{(K\cup \ell_{d+1})\setminus \ell})} \nn_{(K\cup \ell_{d+1})\setminus \ell} \\
  &= \vartheta_{K\cup \ell_{d+1}} + \frac{\ell_{d+1}(x)}{\tilde{\ell}_{d+1}(\nn_{K})} \nn_{K}+ \sum_{\ell\in K}  \frac{\ell(x)}{\tilde{\ell}\left(\nn_{(K\setminus \ell) \cup \ell_{d+1}}\right)} \nn_{(K\setminus\ell)\cup \ell_{d+1}} .\end{align*}
  
 Substituting $x$ with the above expression in the left hand side of (\ref{eq:lemmaalg}), we arrive to
  \begin{multline}\label{eq:pralgforredu} \sum_{K\in \binom{\HH_{d}}{N-1}} P^{[d]}_K(x) \cdot \phi\left(\nn_K^{d+1-N},x\right)
  =  \sum_{K\in \binom{\HH_{d}}{N-1}} P^{[d]}_K(x) \cdot \phi\left(\vartheta_{K\cup \ell_{d+1}}, \nn_K^{d+1-N}\right) \\
  + \sum_{K\in \binom{\HH_{d}}{N-1}} P^{[d]}_K(x) \,  \frac{\ell_{d+1}(x)}{\tilde{\ell}_{d+1}(\nn_{K})}\, \cdot  \phi\left(\nn_K^{d+2-N}\right) \\
  + \sum_{K\in \binom{\HH_{d}}{N-1}} \sum_{\ell\in K} P^{[d]}_K(x)\, \frac{\ell(x)}{\tilde{\ell}(\nn_{(K\setminus \ell) \cup \ell_{d+1}})}\cdot  \phi\left(\nn_K^{d+1-N},\nn_{(K\setminus\ell)\cup \ell_{d+1}}\right) .  
  \end{multline}
  Now, for $K\in \binom{\HH_d}{N-1}$, we have $$P^{[d]}_K(x)\; \frac{\ell_{d+1}(x)}{\tilde{\ell}_{d+1}(\nn_{K})}=P^{[d+1]}_K(x).$$ Hence, the second term
   on the right hand side of (\ref{eq:pralgforredu}) is 
 \begin{equation} \sum_{K\in \binom{\HH_{d+1}}{N-1} , \; \ell_{d+1}\not\in K} P^{[d+1]}_K(x) \cdot \phi\left(\nn_K^{d+2-N}\right). \end{equation}
 Thus, since $K\in \binom{\HH_{d+1}}{N-1} , \; \ell_{d+1}\in K$ means $K=K'\cup\{\ell_{d+1}\}$ with $K'\in \binom{\HH_{d}}{N-2}$, to prove (\ref{eq:lemmaalg}), it remains to establish 
\begin{multline}\label{needtoprove}
\sum_{K\in \binom{\HH_{d}}{N-1}} \sum_{\ell\in K} P^{[d]}_K(x) \, \frac{\ell(x)}{\tilde{\ell}(\nn_{(K\setminus \ell) \cup \ell_{d+1}})} \cdot \phi\left(\nn_K^{d+1-N},\nn_{(K\setminus\ell)\cup \ell_{d+1}}\right) \\ = \sum_{K'\in \binom{\HH_{d}}{N-2}} P^{[d+1]}_{K'\cup\ell_{d+1}}(x) \cdot \phi\left(\nn_{K\cup \ell_{d+1}}^{d+2-N}\right).\end{multline}

We first concentrate on the term $P^{[d]}_K(x) \frac{\ell(x)}{\tilde{\ell}(\nn_{(K\setminus \ell) \cup \ell_{d+1}})}$ on the left hand side of (\ref{needtoprove}). 
Since $\ell\in K$ we have
\begin{align}\label{eqn4}
P^{[d]}_K(x) \frac{\ell(x)}{\tilde{\ell}(\nn_{(K\setminus \ell) \cup \ell_{d+1}})} &=\left\{ \prod_{h\in\HH_d\setminus K} \frac{h(x)}{\tilde{h}(\nn_K)}\right\}
\cdot \frac{\ell(x)}{\tell(\nn_{(K\setminus \ell)\cup\ell_{d+1}})}\\
&=\left\{\prod_{h\in \HH_{d}\setminus(K\setminus \ell)}\frac{h(x)}{\tilde h(\nn_{(K\setminus \ell)\cup\ell_{d+1}})}\right\} \cdot \left\{\prod_{h\in \HH_d\setminus K}  \frac{\tilde h(\nn_{(K\setminus \ell)\cup\ell_{d+1}})}{\tilde{h}(\nn_K)}\right\}\\
&= P^{[d+1]}_{(K\setminus \ell)\cup\ell_{d+1}}(x)\cdot \tilde{P}^{[d]}_K(\nn_{(K\setminus \ell)\cup\ell_{d+1}}).
\end{align} 
Using this expression in the left hand side of (\ref{needtoprove}), we arrive at
\begin{multline}\label{eqn5}
\sum_{K\in\binom{\HH_d}{N-1}}\sum_{\ell\in K} P_K^{[d]}(x)\, \frac{\ell(x)}{\tell(\nn_{(K\setminus \ell)\cup\ell_{d+1}})} \cdot \phi\left(\nn_K^{d-N+1},\nn_{(K\setminus \ell)\cup\ell_{d+1}}\right)\\
=\sum_{K\in\binom{\HH_d}{N-1}}\sum_{\ell\in K}  P^{[d+1]}_{(K\setminus \ell)\cup\ell_{d+1}}(x) \; \tilde{P}^{[d]}_K(\nn_{(K\setminus \ell)\cup\ell_{d+1}})  \cdot \phi\left(\nn_K^{d-N+1},\nn_{(K\setminus \ell)\cup\ell_{d+1}}\right) \\
=\sum_{K'\in\binom{\HH_d}{N-2}}P^{[d+1]}_{K'\cup\ell_{d+1}}(x)  \sum_{K'\subset K\in\binom{\HH_d}{N-1}} \tilde{P}^{[d]}_K(\nn_{K'\cup\ell_{d+1}}) \cdot  \phi\left(\nn_K^{d-N+1},\nn_{K'\cup\ell_{d+1}}\right).
\end{multline}
 
 Now, for a fixed $K'\in\binom{\HH_d}{N-2}$, using lemma \ref{th:techobser} for the first equality (we add null terms) and  corollary \ref{homo_rep} for the second one, we get 
\begin{multline}\label{eqn6}
\sum_{K'\subset K\in\binom{\HH_d}{N-1}} \tilde{P}^{[d]}_K(\nn_{K'\cup\ell_{d+1}})  \phi(\nn_K^{d-N+1},\nn_{K'\cup\ell_{d+1}})\\
=\sum_{K\in\binom{\HH_d}{N-1}} \tilde{P}^{[d]}_K(\nn_{K'\cup\ell_{d+1}})  \phi(\nn_K^{d-N+1},\nn_{K'\cup\ell_{d+1}})
= \phi(\nn_{K'\cup\ell_{d+1}}^{d-N+2}).
\end{multline}
Using (\ref{eqn6}) in the last term of (\ref{eqn5}), we finally arrive at 
\begin{multline}\label{eqn7}
 \sum_{K\in\binom{\HH_d}{N-1}}\sum_{\ell\in K} P_K^{[d]}(x)\frac{\ell(x)}{\tell(\nn_{(K\setminus \ell)\cup\ell_{d+1}})} \phi(\nn_K^{d-N+1},\nn_{(K\setminus \ell)\cup\ell_{d+1}})\\
=\sum_{K'\in\binom{\HH_d}{N-2}}P^{[d+1]}_{K'\cup\ell_{d+1}}(x) \cdot \phi\left(\nn_{K'\cup\ell_{d+1}}^{d-N+2}\right),
\end{multline}
which is (\ref{needtoprove}). This completes the proof of the lemma.
\end{proof}
\begin{corollary}\label{error_taylor}
Let $\HH=\{\ell_1,\ldots,\ell_d\}$ be a collection of $d\geq N$ hyperplanes in general position  in $\RR^N$. For every function $f$ of class $C^{d-N+1}$ on a convex neighborhood $\Omega$ of the origin in $\RR^N$ we have
\begin{multline*}
f(x)-\tay^{d-N}_0(f)(x)\\
= \sum_{i=N}^{d+1}\sum_{K\in \binom{\HH_{i-1}}{N-1}} P^{[i-1]}_K(x)\,\cdot\, \int\limits_{\Big[\underbrace{0,\ldots,0}_{d-N+1}\,,\, x\Big]} f^{(d-N+1)}(\cdot)\big(x^{d-i},\vartheta_{K\cup\ell_i}\,,\,\nn_K^{i-N}\big), \quad x\in \Omega.
\end{multline*}
\end{corollary}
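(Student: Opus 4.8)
The plan is to feed a single, carefully chosen symmetric multilinear form into the purely algebraic identity of Lemma \ref{general_rep}, and then to recognize each side of the resulting equation. First I would fix $x\in\Omega$ and set $s=d-N+1$. Using the multivariate divided difference recalled in subsection \ref{sec:deBoorrf}, I define the symmetric $s$-linear form
\[
\phi(v_1,\dots,v_s):=\big[\,\underbrace{0,\dots,0}_{s},x \mid v_1,\dots,v_s\,\big]f=\int\limits_{[\underbrace{0,\dots,0}_{s},x]} f^{(s)}(\cdot)(v_1,\dots,v_s).
\]
Since $f\in C^{s}(\Omega)$ and $\Omega$ is a convex neighborhood of the origin containing $x$, this form is well defined and symmetric. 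Although $\phi$ depends on the fixed point $x$, this causes no difficulty: Lemma \ref{general_rep} applies to any symmetric $s$-linear form and holds for all values of its free variable, so in particular it holds for this $\phi$ evaluated at the very point $x$ used to construct it.

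Next I would apply Lemma \ref{general_rep} to $\phi$. After substituting back the definition of $\phi$, its right-hand side becomes exactly the double sum appearing in the statement of the corollary; here one checks only that the list of arguments $(x^{d-i},\vartheta_{K\cup\ell_i},\nn_K^{i-N})$ consists of $(d-i)+1+(i-N)=s$ vectors in each term, so that $\phi$ is fed the correct number of entries (and for $i=d+1$ the list reduces to $\nn_K^{\,d+1-N}$, again $s$ vectors). Thus the whole content of the corollary is reduced to identifying the left-hand side $\phi(x^{s})$ with $f(x)-\tay^{d-N}_0(f)(x)$.

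For this last identification I would exploit that all but one of the interpolation nodes have collapsed to the origin. Parametrising the simplex $\Delta_s$ and integrating out the $s-1$ variables attached to the zero nodes — the integrand depending only on the variable $t$ attached to $x$ — one is left with
\[
\phi(x^{s})=\int_0^1 \frac{(1-t)^{s-1}}{(s-1)!}\,f^{(s)}(tx)(x^{s})\,dt.
\]
Setting $g(t)=f(tx)$, the chain rule gives $g^{(s)}(t)=f^{(s)}(tx)(x^{s})$, and this integral is precisely the integral form of the remainder in the univariate Taylor expansion of $g$ about $0$ evaluated at $1$. Since $g(1)=f(x)$ and $\sum_{k=0}^{s-1}g^{(k)}(0)/k!=\tay^{d-N}_0(f)(x)$ (recall $s-1=d-N$), this yields $\phi(x^{s})=f(x)-\tay^{d-N}_0(f)(x)$, which completes the argument. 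The only genuinely delicate point is keeping the collapsed-node computation of the divided difference honest; once the simplex integral has been reduced to the one-dimensional integral above, the conclusion is just the classical Taylor theorem with integral remainder.
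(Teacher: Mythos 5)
Your proposal is correct and follows essentially the same route as the paper: both reduce the corollary to Lemma \ref{general_rep} applied to the symmetric $(d-N+1)$-linear form given by the divided difference $[\,0,\dots,0,x \,|\, \cdot\,]f$ (the paper applies the lemma pointwise to $f^{(d-N+1)}(a)$ under the integral sign, which is equivalent by linearity of the integral), together with the identification of $[\,0,\dots,0,x \,|\, x,\dots,x\,]f$ with the Taylor remainder $f(x)-\tay^{d-N}_0(f)(x)$. The only difference is that the paper cites this last identity from Micchelli's remainder formula for Kergin interpolation, whereas you verify it directly by collapsing the simplex integral to the univariate Taylor formula with integral remainder --- a correct, self-contained substitute for the citation.
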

\begin{proof}
The remainder formula for Taylor polynomial (as a special case of Kergin interpolation, see e.g. \cite[theorem 3]{micchelli}) gives us,   
\begin{equation} f(x)-\tay^{d-N}_0(f)(x)
=\big[\underbrace{0,\dots,0}_{d-N+1},\, x\, |\, \underbrace{x,\dots,x}_{d-N+1}\big]f=\int\limits_{[0,\dots,0,x]} f^{(d-N+1)}(\cdot)(x,\ldots,x).\end{equation}
The corollary then follows directly from lemma \ref{general_rep} since, for every $a\in \Omega$, $f^{(d-N+1)}(a)$ is a symmetric $(d-N+1)$-linear form on $\RR^N$.
\end{proof}

\subsection{Proof of theorem \ref{th:main}} Let $\Omega$ be a neighborhood of the origin on which $f$ is of class $C^{d+1}$. 
 We may assume that 
\begin{enumerate}
	\item[(i)] $\Omega$ contains $B(0,R)$, the closed euclidean ball of center the origin and radius $R$ and, in view of condition $(C1)$,
	\item[(ii)] all the points of $\Theta^{(s)}=\Theta_{\HH^{(s)}}$ lie in $B(0,R)$, $s\in \NN$.
\end{enumerate}
  We set 
 \begin{equation} M=\max_{a\in B(0,R)} \|f^{(d-N+1)}(a)\| <\infty, \end{equation}
 where $\| \cdot \|$ here denotes the usual norm of a multilinear form. 
We use condition (C2) in the form given by (\ref{eq:C2withdet}) taking (\ref{eq:defnK}) into account as follows.
\begin{enumerate}
	\item[(iii)]  There exists $\delta>0$ such that
\begin{equation}\label{eq:usecondC2}\left| \langle \nn_i, \nn_K\rangle\right|\geq \delta, \quad  K\in \binom{\HH^{(s)}}{N-1},\quad \ell_i\not\in K,\quad s\in \NN. \end{equation} 
\end{enumerate}

\medskip

(A) We first derive an estimate on the polynomials $P_K=P_K^{[d]}$ defined in (\ref{eq:defPKdBoor}). We claim that
\begin{equation}\label{eq:estpk}|P_K^{[d]}(x)|\leq \left (\frac{2R}{\delta} \right )^{d-N+1}, \quad x\in B(0,R), \quad K\in\binom{\HH^{(s)}}{N-1}, \quad s\in \NN. \end{equation}
Indeed, if $K\in \binom{\HH^{(s)}}{N-1}$ and $\ell_i\in \HH^{(s)}\setminus K$, since $\vartheta_{K\cup\ell_i}\in \ell_i$, we have $$|c_i|=|\langle \nn_i, \vartheta_{K\cup\ell_i}\rangle|\leq \|\vartheta_{K\cup\ell_i}\| \leq R.$$ 
Next, using (\ref{eq:usecondC2}) and $\|\nn_i\|=1$, we have
\begin{equation}\left| \frac{\ell_i(x)}{\tell_i(\nn_K)}\right | \leq \frac{|\langle \nn_i,x\rangle|+|c_i|}{|\langle \nn_i,  \nn_K \rangle|}\leq\frac{2R}{\delta},\quad  \ell_i\in \HH^{(s)}\setminus K, \end{equation}
which readily implies (\ref{eq:estpk}).

\medskip

(B) We now use theorem \ref{errordeboor} and corollary \ref{error_taylor} to estimate the difference between a Taylor polynomials and a Chung-Yao interpolation polynomial of a same function. To simplify, we omit the index $s$ in the formulas. We have 
\begin{multline}\label{maineqn1}
\lag[\Theta;f](x)-\tay^{d-N}_0(f)(x)=\left[f(x)-\tay^{d-N}_0(f)(x)\right]-\left[f(x)-\lag[\Theta ;f](x)\right]\\
=\sum_{K\in\binom{\HH_d}{N-1}} P^{[d]}_K(x)\Big([0,\dots,0,x\, |\,\nn_K,\ldots,\nn_K]f-[\Theta_{K},x\,|\,\nn_K,\cdots,\nn_K]f\Big )\\ 
+\sum_{i=N}^{d}\sum_{K\in \binom{\HH_{i-1}}{N-1}} P^{[i-1]}_K(x)\int_{[0,\ldots,0,x]} f^{(d-N+1)}(\cdot)(x^{d-i},\vartheta_{K\cup\ell_i},\nn_K^{i-N}), \quad x\in B(0,R). 
\end{multline}
We call $S_1(x)$ and $S_2(x)$ the terms in the above sum and prove that, for every $x\in B(0,R)$, both of them tends to $0$ as $s\rightarrow\infty$. This will achieve the proof (since  simple convergence on a compact set of nonempty interior implies convergence on any normed vector space topology on $\pol^{d-N}(\RR^N)$).  

\medskip

(C) Since, in view of (\ref{eq:estpk}), the polynomials $P^{[d]}_{K}$ are bounded uniformly in $s$, that $S_1(x)\to 0$ for $x\in B(0,R)$ follows from 
\begin{equation}\Big |[\underbrace{0,\ldots,0}_{d-N+1},x|\underbrace{\nn_K,\ldots,\nn_K}_{d-N+1}]f-[\Theta_{K},x|\nn_K,\cdots,\nn_K]f\Big |\to 0\end{equation}
which is a consequence of the fact that the points of $\Theta=\Theta^{(s)}$ tend to $0$ together with the continuity of the divided differences of $f$ as a function of the two groups of its arguments, see subsection \ref{sec:deBoorrf}.

\medskip

(D) As for the term $S_2(x)$, since the right hand side goes to $0$ as $s\to\infty$, the conclusion follows from the following estimate.  
\begin{equation}\label{claim2}
| S_2(x)|\leq \frac{M}{(d-N+1)!}R^{d-N}\left (1+\frac{2}{\delta} \right )^{d-1}\|\Theta\|,\quad x\in B(0,R),
\end{equation}
where $\|\Theta\|=\|\Theta^{(s)}\|:=\max \{\|\vartheta\|\, :\, \vartheta\in \Theta\}$. 
To prove this,  we observe that if $N\leq i\leq d$ and $K\in \binom{\HH_{i-1}}{N-1}$, the bound (\ref{eq:estpk}) (in which $\HH$ is replaced by $\HH_{i-1}$) gives \begin{equation}|P^{[i-1]}_K(x)|\leq \left (\frac{2R}{\delta} \right )^{i-N}, \quad x\in B(0,R). \end{equation}
   Moreover, for every $a\in B(0,R)$, using $\|\nn_K\|\leq 1$, we have
\begin{equation}
\left | f^{(d-N+1)}(a)(x^{d-i},\vartheta_{K\cup \ell_{i}},\nn_K^{i-N}) \right |
\leq	M\|x\|^{d-i}\|\vartheta_{K\cup \ell_i}\|\|\nn_K\|^{i-N}
\leq MR^{d-i} \cdot \|\Theta\|.
\end{equation}
Hence, since $\mathrm{vol}(\Delta_{d-N+1})=1/(d-N+1)!$, for $x\in B(0,R)$ we have
\begin{equation}
\left| \int_{[0,\ldots,0,x]} f^{(d-N+1)}(\cdot)(x^{d-i},\vartheta_{K\cup\ell_i},\nn_K^{i-N}) \right |
\leq \frac{M}{(d-N+1)!}R^{d-i}\|\Theta\|.
\end{equation}
Combining the above estimates, we obtain
\begin{align}
|S_2(x)| &\leq  \sum_{i=N}^{d}\binom{i-1}{N-1}\left (\frac{2R}{\delta} \right )^{i-N}\frac{M}{(d-N+1)!}R^{d-i} \|\Theta\|\\
&=\frac{M}{(d-N+1)!}\|\Theta\|R^{d-N}\sum_{i=N}^{d}\binom{i-1}{i-N}\left (\frac{2}{\delta} \right )^{i-N}\\
& \leq \frac{M}{(d-N+1)!}\|\Theta\|R^{d-N}\sum_{j=0}^{d-1}\binom{d-1}{j}\left (\frac{2}{\delta} \right )^{j}\\
& = \frac{M}{(d-N+1)!}\|\Theta\|R^{d-N}\left (1+\frac{2}{\delta} \right )^{d-1}.
\end{align}
This concludes the proof of theorem \ref{th:main}.

\subsection{An estimate on the error}

The proof actually yields some estimate on the error between Chung-Yao interpolation polynomials and the Taylor polynomial at the origin. It is shown in the following corollary. 
\begin{corollary}\label{estimatedifference} We assume that the assumptions of theorem \ref{th:main} are satisfied. If $f\in C^{d-N+2}(\Omega)$ then 
\begin{multline} \max_{x\in B(0,R)} \| \lag [\Theta^{(s)}; f](x)-\tay^{d-N}_0(f)(x)\|\\=
O\left(\|\theta^{(s)}\| \cdot \left\{\max_{a\in B(0,R)}\|f^{(d-N+1)}(a)\|+ \max_{a\in B(0,R)}\|f^{(d-N+2)}(a)\|\right\}\right). \end{multline}
where the constant involved in the symbol $O$ does not depend on $f$.   
\end{corollary}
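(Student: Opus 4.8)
The plan is to revisit the decomposition obtained in the proof of theorem \ref{th:main}, namely
$$\lag[\Theta^{(s)};f](x)-\tay^{d-N}_0(f)(x)=S_1(x)+S_2(x),\quad x\in B(0,R),$$
where $S_1$ and $S_2$ are the two sums appearing in (\ref{maineqn1}), and to upgrade the qualitative statements ``$S_1\to 0$'' and ``$S_2\to 0$'' into the two quantitative bounds announced in the corollary. The term $S_2$ requires no new work: the estimate (\ref{claim2}) already reads
$$\max_{x\in B(0,R)}|S_2(x)|\le \frac{R^{d-N}}{(d-N+1)!}\Big(1+\frac{2}{\delta}\Big)^{d-1}\|\Theta^{(s)}\|\cdot\max_{a\in B(0,R)}\|f^{(d-N+1)}(a)\|,$$
and, since $\delta$ (from (\ref{eq:usecondC2})) and $R$ do not depend on $f$, this is precisely a contribution of size $O(\|\Theta^{(s)}\|\cdot\max_{B(0,R)}\|f^{(d-N+1)}\|)$ with an $f$-independent constant.

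The work is to bound $S_1$. I would first recall the uniform bound $|P^{[d]}_K(x)|\le(2R/\delta)^{d-N+1}$ from (\ref{eq:estpk}) and the fact that $\binom{\HH_d}{N-1}$ has $\binom{d}{N-1}$ elements, so that it suffices to control, for each $K$, the difference of divided differences
$$D_K:=\big[\underbrace{0,\dots,0}_{d-N+1},x\,|\,\nn_K,\dots,\nn_K\big]f-\big[\Theta_K,x\,|\,\nn_K,\dots,\nn_K\big]f.$$
Writing $s:=d-N+1$ and using the integral representation of subsection \ref{sec:deBoorrf}, both divided differences are integrals over $\Delta_s$ of $f^{(s)}(\cdot)(\nn_K,\dots,\nn_K)$ evaluated along the respective affine paths. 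After ordering $x$ last in each configuration, the path for the all-zero configuration is $p_1(\xi)=\xi_s x$, while for $\Theta_K=\{\theta_0,\dots,\theta_{s-1}\}$ it is $p_2(\xi)=\theta_0+\sum_{i=1}^{s-1}\xi_i(\theta_i-\theta_0)+\xi_s(x-\theta_0)$.

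The key computation, which I expect to be the only delicate point, is the pointwise bound $|p_1(\xi)-p_2(\xi)|\le\|\Theta^{(s)}\|$ on $\Delta_s$: indeed $p_1-p_2=\theta_0\big(1-\sum_{i=1}^s\xi_i\big)+\sum_{i=1}^{s-1}\xi_i\theta_i$ is a combination of points of $\Theta_K$ with nonnegative coefficients of total mass $1-\xi_s\le 1$, whence its norm is at most $\|\Theta^{(s)}\|$. Since $B(0,R)$ is convex and contains both $p_1(\xi)$ and $p_2(\xi)$ (each being a convex combination of points of $B(0,R)$), I would then apply the mean value inequality to $f^{(s)}$ to obtain $\|f^{(s)}(p_1(\xi))-f^{(s)}(p_2(\xi))\|\le \|\Theta^{(s)}\|\,\max_{B(0,R)}\|f^{(s+1)}\|$; this is exactly where the extra degree of smoothness $f\in C^{d-N+2}$ is consumed. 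Integrating over $\Delta_s$ (of volume $1/s!$) and using $\|\nn_K\|\le 1$ yields $|D_K|\le \|\Theta^{(s)}\|\,\max_{B(0,R)}\|f^{(d-N+2)}\|/(d-N+1)!$. Multiplying by the uniform bound on $|P^{[d]}_K|$ and summing over the $\binom{d}{N-1}$ indices $K$ gives $\max_{x\in B(0,R)}|S_1(x)|=O(\|\Theta^{(s)}\|\cdot\max_{B(0,R)}\|f^{(d-N+2)}\|)$ with an $f$-independent constant. Adding the bounds for $S_1$ and $S_2$ produces the stated $O$-estimate and completes the proof.
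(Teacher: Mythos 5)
Your proposal is correct and follows essentially the same route as the paper: the same $S_1+S_2$ decomposition from the proof of theorem \ref{th:main}, reuse of (\ref{claim2}) and (\ref{eq:estpk}), and a mean value inequality applied to $f^{(d-N+1)}$ inside the Hermite--Genocchi integral to extract the factor $\|\Theta^{(s)}\|$ (the paper takes $x$ as the base point of both simplices, so the difference of arguments is directly $-\sum_j\xi_j\theta_{Kj}$, while you put $x$ last and get the same sub-convex combination of the $\theta_j$'s --- an immaterial bookkeeping difference, as is the sign in your expression for $p_1-p_2$).
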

\begin{proof} We turn to the term $S_1(x)$ in the previous proof. For simplicity, we set $m=d-N+1$. Since $f\in C^{m+1}(\Omega)$, for all $K\in \binom{\HH_d}{N-1}$, the mean value inequality gives 
\begin{multline}\label{estimatecorollary}
 \left| [0,\ldots,0,x|\nn_K,\ldots,\nn_K]f-[\Theta_{K},x|\nn_K,\ldots,\nn_K]f  \right|\\
= \left | \int_{\Delta_{m}}
 \left\{ f^{(m)}\Big (x+\sum_{j=1}^{m}(0-x)\xi_j\Big)(\nn_K^m)
-f^{(m)}\Big (x+\sum_{j=1}^{m}(\theta_{Kj}-x\big )\xi_j\Big)(\nn_K^m)  \right \} d\xi \right |\\
\leq \int_{\Delta_{m}}\max_{B(0,R)}\|f^{(m+1)}\| \,  \left\| \sum_{j=1}^{m}\theta_{Kj}\xi_j\right\| \, \|\nn_K\|^{m}d\xi 
\leq \frac{1}{m!} \, \max_{B(0,R)}\|f^{(m+1)}\| \, \|\Theta\|,
\end{multline}
where $\Theta_{K}=\{\theta_{Kj}: i=1,\ldots,m\}$. 
Using (\ref{claim2}) and (\ref{estimatecorollary}) in (\ref{maineqn1}), we finally get 
\begin{multline}\label{finalcorollary} 
 \max_{x\in B(0,R)}\| \lag[\Theta,f](x)-\tay^{d-N}_0(f)(x)\| \\
\leq \binom{d}{N-1}\left (\frac{2R}{\delta} \right )^{d-N+1}\frac{1}{(d-N+1)!} \max_{B(0,R)}\|f^{(d-N+2)}\| \; \|\Theta\|\\+
\frac{1}{(d-N+1)!} \max_{B(0,R)}\|f^{(d-N+1)}\| \; R^{d-N} \; \left (1+\frac{2}{\delta} \right )^{d-1}\|\Theta\|\\
=\Big (M_1\max_{B(0,R)}\|f^{(d-N+1)}\|+  M_2\max_{B(0,R)}\|f^{(d-N+2)}\|\Big ) \; \|\Theta\|.
\end{multline}
\end{proof}
 
\subsection*{Acknowledgement} The work of Phung Van Manh is supported by a PhD fellowship from the Vietnamese government. 
\bibliographystyle{plain}
\bibliography{bib_ch_jpcpvm}

\end{document}